\definecolor{vert}{rgb}{0,0.6,0}
\theoremstyle{plain}
\newtheorem{thm}{Theorem}[section]
\newtheorem{lem}[thm]{Lemma}
\newtheorem{prop}[thm]{Proposition}
\theoremstyle{remark}
\newtheorem{rem}{\bf{Remark}}
\numberwithin{equation}{section}
\newcommand{\M}{\mathbb{M}}
\newcommand{\R}{\mathbb{R}}
\newcommand{\T}{\mathbb{T}}
\newcommand{\Z}{\mathbb{Z}}
\newcommand{\cC}{\mathcal{C}}
\newcommand{\Li}{L^{\infty}}
\newcommand{\Lip}{{\rm Lip\,}}
\newcommand{\Q}{\mathbb{T}^{n}\times(0,\infty)}
\newcommand{\cQ}{\mathbb{T}^{n}\times[0,\infty)}
\newcommand{\al}{\alpha}
\newcommand{\del}{\delta}
\newcommand{\ep}{\varepsilon}
\newcommand{\sig}{\sigma}
\newcommand{\Del}{\Delta}
\newcommand{\ol}{\overline}
\newcommand{\pl}{\partial}
\newcommand{\diag}{{\rm diag}\,}
\newcommand{\Div}{{\rm div}\,}
\newcommand{\tr}{{\rm tr}\,}
\begin{document}
\title[A new method for large time behavior of Hamilton--Jacobi equations]
{A new method for large time behavior\\ 
of degenerate viscous Hamilton--Jacobi equations\\ 
with convex Hamiltonians}

\author[F. CAGNETTI, D. GOMES, H. MITAKE, H. V. TRAN]
{Filippo Cagnetti, Diogo Gomes, Hiroyoshi Mitake and Hung V. Tran}

\address[F. Cagnetti]
{Center for Mathematical Analysis, Geometry, and Dynamical Systems, Departamento de Matem\'atica, 
Instituto Superior T\'ecnico, 1049-001 Lisboa, Portugal}
\email{cagnetti@math.ist.utl.pt}

\address[D. Gomes]
{Center for Mathematical Analysis, Geometry, and Dynamical Systems, Departamento de Matem\'atica, 
Instituto Superior T\'ecnico, 1049-001 Lisboa, Portugal
and King Abdullah University of Science and Technology (KAUST), 
CSMSE Division, Thuwal 23955-6900, Saudi Arabia}
\email{dgomes@math.ist.utl.pt}

\address[H. Mitake]
{
Department of Applied Mathematics, 
Faculty of Science, Fukuoka University, 
Fukuoka 814-0180, Japan}
\email{mitake@math.sci.fukuoka-u.ac.jp}

\address[H. V. Tran]
{
Department of Mathematics, 
The University of Chicago, 5734 S. University Avenue, Chicago, Illinois 60637, USA}
\email{hung@math.uchicago.edu}

\keywords{Large-time Behavior; Hamilton--Jacobi Equations; Degenerate Parabolic Equations; 
Nonlinear Adjoint Methods; Viscosity Solutions}
\subjclass[2010]{
35B40, 
35F55, 
49L25 
}

\maketitle

\begin{abstract}
We investigate large-time asymptotics for viscous Hamilton--Jacobi equations with possibly degenerate diffusion terms. 
We establish new results on the convergence, which are the first general ones   concerning equations which are neither uniformly parabolic  nor first order. 
Our method is based on the nonlinear adjoint method and the derivation of new estimates on long time averaging effects. It also extends to the case of weakly coupled systems. 
\end{abstract}


\tableofcontents


\section{Introduction}
In this paper we obtain new results on the study of the large time behavior of Hamilton--Jacobi equations with possibly degenerate diffusion terms \begin{equation}\label{eq:general}
u_t + H(x,Du)  = \tr\big(A(x)D^2u\big)  \qquad \text{ in } \Q,
\end{equation}
where $\T^n$ is the $n$-dimensional torus $\R^n/\Z^n$.
Here $Du, D^2 u$ are the (spatial) gradient and Hessian of the real-valued unknown function $u$ defined on $\T^n \times [0,\infty)$.
The functions $H:\T^n\times\R^n\to\R$ and $A:\T^n\to \M^{n\times n}_{\text{sym}}$ are the Hamiltonian and the diffusion matrix, respectively,
where $\M^{n\times n}_{\text{sym}}$ is the set of $n \times n$ real symmetric matrices. The basic hypotheses that we require are that
$H$ is uniformly convex in the second variable, and $A$ is nonnegative definite.

Our goal in this paper is to study the large time behavior of viscosity solutions of \eqref{eq:general}.  
Namely, we prove that  
\begin{equation}\label{lt}
\|u(\cdot,t)-(v-ct)\|_{\Li(\T^n)} \to 0 \quad \text{as} \ t \to \infty,
\end{equation}
where $(v,c)$ is a solution of the \textit{ergodic problem} 
\begin{equation}\label{eq:ergodic}
H(x, Dv)=\tr\big(A(x)D^2v\big)+c \quad \text{in}\  \T^n. 
\end{equation}
In view of the quadratic or superquadratic growth of the Hamiltonian,
there exists a unique constant $c\in \R$ such that 
\eqref{eq:ergodic} holds true for some $v \in C(\T^n)$ in the viscosity sense.
We notice that in the uniformly parabolic case ($A$ is positive definite), 
$v$ is unique up to additive constants. 
It is however typically the case that $v$ is not unique even up to additive constants when $A$ is degenerate, which makes the convergence \eqref{lt} delicate and hard to be achieved. 
We will state clearly the existence result of \eqref{eq:ergodic}, which itself is important, in Section 2.

It is worth emphasizing here that the study of the large-time asymptotics for 
this type of equations was only available in the literature for the uniformly parabolic case and for the first order case. There was no results on the large-time asymptotics for \eqref{eq:general} with possibly degenerate diffusion terms up to now as far as the authors know.

In the last decade,
a number of authors have studied extensively
the large time behavior of solutions of 
(first order) Hamilton--Jacobi equations 
(i.e., \eqref{eq:general} with $A \equiv 0$), where $H$ is coercive. 
Several convergence results have been established. 
The first general theorem in this direction was proven by
Namah and Roquejoffre in \cite{NR}, under the assumptions:
$p \mapsto H(x,p)$ is convex, $H(x,p)\ge H(x,0)$ for all $(x,p)\in\T^n\times\R^{n}$,   
and $\max_{x\in\T^n}H(x,0)=0$. 
Fathi then gave a breakthrough in this area in  \cite{F2} 
by using a dynamical systems approach from the weak KAM theory.
Contrary to \cite{NR}, the results of \cite{F2} use uniform convexity and smoothness assumptions on the Hamiltonian but do not require any 
condition on the structure above. 
These rely on a deep understanding of the dynamical structure of the solutions and of the corresponding ergodic problem. 
See also the paper of Fathi and Siconolfi \cite{FS} for a beautiful characterization of the Aubry set.
Afterwards, Davini and Siconolfi in \cite{DS} and Ishii in \cite{I2008} refined 
and generalized the approach of Fathi, and studied the asymptotic problem for Hamilton--Jacobi equations 
on $\T^n$ and on the whole $n$-dimensional Euclidean space, respectively.
Besides, Barles and Souganidis \cite{BS} 
obtained additional results, for possibly non-convex Hamiltonians,
by using a PDE method in the context of viscosity solutions. 
Barles, Ishii and Mitake \cite{BIM2} simplified the ideas in \cite{BS} and 
presented the most general assumptions (up to now). 
In general, these methods are based crucially on 
delicate stability results of extremal curves in the context of the dynamical approach in light of the finite speed of propagation, 
and of solutions for time large in the context of the PDE approach. 
It is also important to point out that the PDE approach in \cite{BS, BIM2} 
does not work with the presence of any second order terms.

In the uniformly parabolic setting (i.e., $A$ uniformly positive definite),
Barles and Souganidis \cite{BS2} proved the long-time convergence of solutions. 
Their proof relies on a completely distinct set of ideas from the ones used 
in the first order case as the associated ergodic problem
has a simpler structure.
Indeed, the strong maximum principle holds, the ergodic problem 
has a unique solution up to constants. 
The proof for the large-time convergence in \cite{BS2} strongly depends on this fact.

It is clear that all the methods aforementioned (for both the cases $A\equiv 0$ and $A$ uniformly positive definite) are not applicable for the general
degenerate viscous cases because of 
the presence of the second order terms and the lack of both the finite speed
of propagation as well as the strong comparison principle. 
We briefly describe the key ideas on establishing \eqref{lt} in subsection 1.1.
Here the nonlinear adjoint method, which was introduced by Evans in \cite{Ev1},
plays the essential role in our analysis. 
Our main results are stated in subsection 1.2.

\subsection{Key Ideas}\label{intro:machinery}
Let us now briefly describe the key ideas on establishing \eqref{lt}.
Without loss of generality, we may assume the ergodic constant is $0$ henceforth. 
In order to understand the limit as $t \to \infty$, we introduce a rescaled problem.
For $\ep>0$, set $u^\ep(x,t)=u(x,t/\ep)$. 
Then $(u^\ep)_t (x,t)=\ep^{-1} u_t(x,t/\ep)$, 
$Du^\ep(x,t)=Du(x,t/\ep)$, and $u^\ep$ solves
\begin{equation} \notag
\begin{cases}
\varepsilon u^{\ep}_t + H(x,Du^{\ep})=\tr\big(A(x)D^2u^{\ep}\big) & \text{ in } \T^n\times (0,\infty), \\
u^{\ep} (x,0)=u_0(x),  & \text{ on } \T^n.
\end{cases}
\end{equation}
By this rescaling, $u^\ep(x,1)=u(x,1/\ep)$ 
and we can easily see that to prove \eqref{lt} is equivalent to prove that 
\begin{equation}\notag
\|u^{\ep}(\cdot,1)- v\|_{\Li(\T^n)}\to 0 \quad \text{as} \ \ep\to 0. 
\end{equation}

To show the above, 
we first introduce the following approximation: 
\begin{equation} \notag
\begin{cases}
\varepsilon w^{\ep}_t + H(x,Dw^{\ep})=\tr\big(A(x)D^2w^{\ep}\big)
+\ep^4\Del w^{\ep} & \text{ in } \T^n\times (0,\infty), \\
u^{\ep} (x,0)=u_0(x),  & \text{ on } \T^n.
\end{cases}
\end{equation}
Then, we observe that $w^\ep$ is smooth, and
$$
\|w^\ep(\cdot,1)-u^\ep(\cdot,1)\|_{\Li(\T^n)} \to 0
\quad \text{as} \ \ep \to 0.
$$
It is thus enough to derive the convergence of $w^\ep(\cdot,1)$ as $\ep \to 0$.  
To prove this, we show that
\begin{equation} \label{act-prin}
\ep\|w^{\ep}_{t}(\cdot,1)\|_{\Li(\T^n)} \to 0 \quad  \text{as} \ \ep\to 0, 
\end{equation}
which is a way to prove the convergence \eqref{lt}.  
Indeed, \eqref{lt} is a straightforward consequence of \eqref{act-prin} by using the stability of viscosity solutions. 
We notice that this principle appears in 
the papers of Fathi \cite{F2}, Barles and Souganidis \cite{BS} in the case $A\equiv 0$ in a completely different way. 
More precisely, Barles and Souganidis  \cite{BS} first realized the importance of \eqref{act-prin}, and they gave a beautiful proof of the fact that $\max\{u_t,0\}, \min\{u_t,0\}\to 0$ as $t\to\infty$ in the viscosity sense. In view of this fact, they succeeded to deal with some cases of non-convex Hamilton--Jacobi equations. 
On the other hand, we emphasize that the proofs in \cite{F2, BS} do not work at all for the second order cases, and therefore, one cannot apply it to \eqref{eq:general}. 
One of our key contributions in this  paper is the establishment of \eqref{act-prin} in the general setting.

In order to prove \eqref{act-prin}, we use the nonlinear adjoint method  introduced  by Evans \cite{Ev1}
and give new ingredients on the averaging action as $t \to \infty$ (or equivalently as $\ep \to 0$ by rescaling), as clarified below.
Let $L_{w^\ep}$ be the {\it formal linearized operator} of the regularized equation 
around $w^\ep$, i.e., 
$$
L_{w^\ep} f  := \frac{\partial}{\partial \eta} 
\Big[ \ep (w^{\ep} + \eta f)_t + H(x,Dw^{\ep} + \eta Df)
-\tr\big(A(x)(D^2w^{\ep}+\eta D^2f)\big) 
- \ep^{4}\Delta ( w^{\ep} + \eta f)
\Big] \Big|_{\eta = 0} ,
$$
for any $f\in C^2(\T^n \times(0,\infty))$.
Then we consider the following adjoint equation:  
\begin{equation} \notag
\begin{cases}
L_{w^\ep}^{*} \sig^\ep=0 & \text{ in } \T^n\times (0,1), \\
\sig^{\ep} (x,1)=\del_{x_0}  & \text{ on } \T^n,
\end{cases}
\end{equation}
where $L_{w^\ep}^{*}$ is the formal adjoint operator of $L_{w^\ep}$,
and $\del_{x_0}$ is the Dirac delta measure at some point $x_0 \in \T^n$.
We then see that $\sig^\ep(\cdot,t)$ is a probability measure for all $t \in (0,1)$
and conservation of energy holds, namely,  
$$
\dfrac{d}{dt} \int_{\T^n} 
\big[H(x,Dw^\ep)-\tr\big(A(x)D^2w^\ep\big)-\ep^4 \Del w^\ep\big]\sig^\ep(x,t)\,dx=0.
$$
The conservation of energy in particular gives us 
a different and completely new way to interpret $\ep w^{\ep}_t(\cdot,1)$ as
\begin{equation}\label{con-e}
\ep w^{\ep}_t(x_0,1)=\int_0^1 \int_{\T^n}  
\big[H(x,Dw^\ep)-\tr\big(A(x)D^2w^\ep\big)-\ep^4 \Del w^\ep\big]\sig^\ep(x,t)\,dx\,dt.
\end{equation}
The most important part of the paper is then about showing that
 the right hand side of \eqref{con-e} vanishes as $\ep \to 0$, 
which requires new ideas and estimates (see Lemmas \ref{LEM} and \ref{lem:couple}). 
We also notice that the averaging action appears implicitly in \eqref{con-e} and 
plays the key role here.
More precisely, if we rescale the above integral back to its actual scale, 
it turns out to be 
\begin{equation}\label{average-action}
\dfrac{1}{T} \int_0^T \int _{\T^n}
\left[H(x,Dw^\ep)-\tr\big(A(x)D^2w^\ep\big)-\ep^4 \Del w^\ep\right]\sig^\ep(x,t)\,dx\,dt, 
\end{equation}
where $T=1/\ep \to \infty$.

The nonlinear adjoint method for Hamilton-Jacobi equations was introduced  
by Evans \cite{Ev1} to study
the  vanishing viscosity process, and gradient shock structures of viscosity solutions
 of non convex Hamilton--Jacobi equations. Afterwards, Tran \cite{T1} used it to 
 establish a rate of convergence for static Hamilton--Jacobi equations and was able to relax the convexity assumption of
the Hamiltonians in some cases. Cagnetti, Gomes and Tran \cite{CGT1} then used it to study 
the Aubry--Mather theory in the non convex settings and established the existence
of Mather measures. See also \cite{CGT2, Ev2} for further developments of this new method in the context of Hamilton--Jacobi equations.
We notice further that $\sig^\ep$ here is strongly related to the Mather measures in the context of the weak KAM theory. See \cite{CGT1} for more details.

\subsection{Main Results}

We state the assumptions we use \textit{throughout} the paper 
as well as our main theorems. 

For some given $\theta, C>0$,
we denote by $\mathcal{C}(\theta,C)$ the class of all pairs of $(H,A)$ satisfying
\begin{itemize}
\item[(H1)] $H \in C^2(\T^n \times \R^n)$, and 
$D^2_{pp}H \ge 2\theta I_n$, where $I_n$ is the identity matrix of size $n$,
\item[(H2)] 
$|D_x H(x,p)| \le C(1+|p|^2)$,  
\item[{(H3)}] 
$A(x)=(a^{ij}(x))\in \M^{n\times n}_{\text{sym}}$ with $A(x)\ge 0$,  and $A\in C^2(\T^n)$.
\end{itemize}

\begin{thm}[Main Theorem 1]\label{main-thm1}
Assume that $(H,A) \in \mathcal{C}(\theta,C)$. 
Let $u$ be the solution of {\rm\eqref{eq:general}} with initial data $u(\cdot,0)=u_0 \in C(\T^n)$. 
Then there exists $(v,c) \in C(\T^n) \times \R$ such that 
\eqref{lt} holds, 
where the pair $(v,c)$ is a solution of the ergodic problem \eqref{eq:ergodic}.  
\end{thm}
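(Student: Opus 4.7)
The plan is to follow the blueprint set out in Section 1.1 and make each step rigorous. Without loss of generality assume $c = 0$ (replace $H$ by $H - c$, which preserves (H1)--(H2)), and let $v \in C(\T^n)$ be a solution of \eqref{eq:ergodic} provided by Section 2. The rescaling $u^{\ep}(x,t) := u(x,t/\ep)$ shows that \eqref{lt} is equivalent to $\|u^\ep(\cdot,1) - v\|_{L^\infty(\T^n)} \to 0$ as $\ep \to 0$. Because viscosity solutions lack the regularity needed for the adjoint computation, I would smooth the problem by adding $\ep^4\Del$ to obtain a classical solution $w^\ep$ of a uniformly parabolic equation with the same initial datum $u_0$. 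Standard parabolic estimates combined with (H1)--(H2) yield uniform Lipschitz bounds for $w^\ep$ and the stability estimate $\|w^\ep(\cdot,1) - u^\ep(\cdot,1)\|_{L^\infty(\T^n)} \to 0$, reducing the task to showing $w^\ep(\cdot,1)\to v$ uniformly.

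The central step is the action principle \eqref{act-prin}. Fixing $x_0 \in \T^n$, I would solve backward the linear adjoint problem $L_{w^\ep}^* \sig^\ep = 0$ on $\T^n\times(0,1)$ with terminal data $\del_{x_0}$; since $L_{w^\ep}^*$ is a Fokker--Planck operator whose diffusion part is the strictly positive-definite matrix $A + \ep^4 I_n$, standard parabolic theory produces a smooth probability density $\sig^\ep(\cdot,t)$ for every $t \in [0,1)$. Testing the equation for $w^\ep$ against $\sig^\ep$, differentiating in time, and integrating by parts via $L_{w^\ep}^* \sig^\ep = 0$ leads to the conservation of energy
$$
\frac{d}{dt}\int_{\T^n}\big[H(x,Dw^\ep) - \tr(A(x)D^2 w^\ep) - \ep^4 \Del w^\ep\big]\,\sig^\ep(x,t)\,dx = 0,
$$
and combining this identity with the equation for $w^\ep$ yields exactly the representation formula \eqref{con-e} for $\ep w^\ep_t(x_0,1)$.

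The crux of the proof, and the main obstacle, is to show that the right-hand side of \eqref{con-e} vanishes as $\ep \to 0$ uniformly in $x_0$. After rescaling this amounts to the decay of the long-time average \eqref{average-action} over $[0,1/\ep]$, which is the content of Lemmas \ref{LEM} and \ref{lem:couple}. The uniform convexity (H1) converts terms involving $H$ into coercive quadratic expressions that can be tested against $\sig^\ep$; the growth bound (H2), combined with $L^2(\sig^\ep)$ estimates on $Dw^\ep$ and $\ep^2 D^2 w^\ep$ obtained by differentiating the equation and pairing with $\sig^\ep$, closes the energy estimate; and the nonnegativity of $A$ ensures that the degenerate second-order term contributes controllably rather than obstructing the bound. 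Once \eqref{act-prin} is established, the Lipschitz bounds for $w^\ep$ give precompactness in $C(\T^n)$, and stability of viscosity solutions applied to the rescaled equation with vanishing source $\ep w^\ep_t$ shows that every cluster point of $\{w^\ep(\cdot,1)\}$ is a solution of \eqref{eq:ergodic} with $c=0$; the identification of a single limit $v$ should then follow from standard arguments on the $\omega$-limit set of the semigroup generated by \eqref{eq:general}. I expect the derivation of the averaging estimate to be the decisive difficulty: without a strong comparison principle or finite speed of propagation available, the whole argument must be carried out inside the identity \eqref{con-e}, using the adjoint measure $\sig^\ep$ to compensate for the degeneracy of $A$.
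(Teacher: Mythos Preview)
Your outline follows the paper's scheme almost step by step, but two points deserve correction.

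First, the sentence ``let $v\in C(\T^n)$ be a solution of \eqref{eq:ergodic} \ldots\ reducing the task to showing $w^\ep(\cdot,1)\to v$'' is not right as stated: when $A$ is degenerate the ergodic problem typically has many solutions, so one cannot fix $v$ in advance and hope that $w^\ep(\cdot,1)$ converges to that particular one. The paper does what you yourself sketch later: extract, by the uniform $C^1$ bound, a subsequence $\ep_m\to 0$ with $w^{\ep_m}(\cdot,1)\to v$; Theorem~\ref{thm-1} forces $v$ to solve (E); and then the comparison principle (since $v$ is a stationary solution of (C)) makes $t\mapsto\|u(\cdot,t)-v\|_{L^\infty}$ nonincreasing, which upgrades subsequential to full convergence. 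Your phrase ``standard arguments on the $\omega$-limit set'' covers this, but the monotonicity via comparison is the concrete mechanism and should be named.

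Second, and more substantively, your description of how \eqref{con-e} is shown to vanish omits the essential object: the \emph{approximate} corrector $v^\ep$ solving $(\mathrm{E})_\ep$. Bounding $Dw^\ep$ and $\ep^2 D^2 w^\ep$ in $L^2(\sig^\ep)$ does not by itself make the right-hand side of \eqref{con-e} small, because $H(x,Dw^\ep)$ and $(\ep^4+a)\Del w^\ep$ are individually $O(1)$. What the paper does (Lemma~\ref{LEM} and the proof of Theorem~\ref{thm-1}) is subtract the identity $H(x,Dv^\ep)-(\ep^4+a)\Del v^\ep=\ol H_\ep$ inside the integral, reducing matters to controlling $|D(w^\ep-v^\ep)|$, $\ep^4|D^2(w^\ep-v^\ep)|$ and $a|D^2(w^\ep-v^\ep)|$ in $L^2(\sig^\ep)$. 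The uniform convexity (H1) then produces the bound $\int|D(w^\ep-v^\ep)|^2\sig^\ep\le C\ep$ by testing the difference of the two equations against $\sig^\ep$; the remaining second-order bounds come from differentiating that difference and testing again. Without $v^\ep$ there is nothing for the convexity to act on, and the ``coercive quadratic expressions'' you allude to do not materialise.
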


We also consider the weakly coupled system 
\begin{equation}\label{eq:general-system}
(u_i)_t + H_i(x,Du_i) + \displaystyle\sum_{j=1}^m c_{ij} u_j=\tr\big(A_i(x)D^2u_i\big) \quad \text{ in } \Q, \ \text{for} \ i=1,\ldots,m. 
\end{equation}
where  $(H_i,A_i)\in \mathcal{C}(\theta,C)$ is, for each $i$, the Hamiltonian and diffusion matrix $H_i:\T^n\times\R^n\to\R$ and $A_i:\T^n\to \M^{n\times n}_{\text{sym}}$ and $u_i$ are the real-valued unknown functions on $\T^n \times [0,\infty)$ for $i=1,\ldots,m$. 
The coefficients $c_{ij}$ are given constants for $1\le i, j \le m$ which are assumed to satisfy 
\begin{itemize}
\item[{\rm(H4)}] \ 
$c_{ii}>0$, $c_{ij} \le 0$ for $i \ne j$, $\displaystyle\sum_{j=1}^m c_{ij}=0$ for 
any $i=1,\ldots,m$.  
\end{itemize}
We remark that {\rm(H4)} ensures that \eqref{eq:general-system} is a monotone system. 

Under these conditions, we prove 
\begin{thm}[Main Theorem 2]\label{mainsist}
Assume that $(H_i,A_i) \in \mathcal{C}(\theta,C)$, 
and $c_{ij}$ satisfies 
{\rm(H4)} for $1\le i, j \le m$. 
Let $(u_1,\ldots,u_m)$ be the solution of  \eqref{eq:general-system} with 
initial data
$(u_{01},\ldots, u_{0m}) \in C (\T^n)^m$. 
There exists $(v_1,\ldots,v_m,c) \in C (\T^n)^m\times\R$
such that 
\begin{equation}\label{lt:system}
\|u_i (\cdot, t)-(v_i-ct)\|_{\Li(\T^n)}\to0 \quad \text{as} \ t \to + \infty, \quad \text{for} \  i=1,\ldots m, 
\end{equation}
where $(v_1,\ldots,v_m, c)$ is a solution of the {\it ergodic problem} 
for systems: 
\begin{equation}\notag 
H_i(x,Dv_i) + \displaystyle\sum_{j=1}^m c_{ij} v_j=\tr\big(A_i(x)D^2v_i\big)+c \ \text{ in } \T^n, \ \text{for} \ i=1,\ldots,m. 
\end{equation}
\end{thm}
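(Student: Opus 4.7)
\emph{Strategy and rescaling.} My plan is to adapt the nonlinear adjoint method of Theorem~\ref{main-thm1} to the coupled system \eqref{eq:general-system}. Assume, as in the scalar case, that the ergodic constant is $c=0$, and set $u_i^\ep(x,t):=u_i(x,t/\ep)$ for $i=1,\ldots,m$; then \eqref{lt:system} is equivalent to $\|u_i^\ep(\cdot,1)-v_i\|_{\Li(\T^n)}\to 0$ for every $i$. Next, replace each $u_i^\ep$ by the regularized solution $w_i^\ep$ obtained by adding $\ep^4\Del w_i^\ep$ to the $i$-th rescaled equation; stability for the weakly coupled monotone system gives $\|w_i^\ep(\cdot,1)-u_i^\ep(\cdot,1)\|_{\Li(\T^n)}\to 0$, so it suffices to prove the system action principle $\ep\|(w_i^\ep)_t(\cdot,1)\|_{\Li(\T^n)}\to 0$ for every $i$.

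\emph{Vectorial adjoint and the role of (H4).} Linearizing the regularized system around $(w_1^\ep,\ldots,w_m^\ep)$ and taking formal adjoints, fix $k\in\{1,\ldots,m\}$ and $x_0\in\T^n$, and solve on $\T^n\times(0,1)$ the backward coupled adjoint system
\begin{equation*}
-\ep\,\pl_t\sig_i^{\ep,k}-\Div\bigl(D_pH_i(x,Dw_i^\ep)\,\sig_i^{\ep,k}\bigr)+\sum_{j=1}^m c_{ji}\,\sig_j^{\ep,k}-\sum_{p,q}\pl_p\pl_q\bigl(a_i^{pq}(x)\sig_i^{\ep,k}\bigr)-\ep^4\Del\sig_i^{\ep,k}=0,
\end{equation*}
for $i=1,\ldots,m$, with terminal data $\sig_i^{\ep,k}(\cdot,1)=\del_{ik}\,\delta_{x_0}$. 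Hypothesis (H4) supplies the two structural ingredients that the analysis needs. First, since $c_{ij}\le 0$ for $i\ne j$ transfers to $C^{T}$, the adjoint system is again cooperative and a maximum-principle argument for weakly coupled monotone systems yields $\sig_i^{\ep,k}\ge 0$ on $\T^n\times[0,1]$. Second, the row-sum identity $\sum_j c_{ij}=0$, on integrating the $i$-th adjoint equation on $\T^n$ and summing in $i$, forces $\sum_{i=1}^m\int_{\T^n}\sig_i^{\ep,k}(x,t)\,dx\equiv 1$ for all $t\in[0,1]$. Thus $(\sig_1^{\ep,k},\ldots,\sig_m^{\ep,k})$ plays the role of the probability measure $\sig^\ep$ from the scalar proof.

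\emph{Conservation of energy and integral representation.} Differentiating the $i$-th regularized equation in $t$ shows that $((w_1^\ep)_t,\ldots,(w_m^\ep)_t)$ solves the linearized system. Multiplying the $i$-th linearization by $\sig_i^{\ep,k}$, integrating on $\T^n$, summing in $i$, and integrating by parts using the adjoint equation give the conservation identity
\begin{equation*}
\frac{d}{dt}\Bigl(\ep\sum_{i=1}^m\int_{\T^n}(w_i^\ep)_t\,\sig_i^{\ep,k}\,dx\Bigr)=0.
\end{equation*}
Evaluating at $t=1$ via the Dirac terminal data isolates $\ep(w_k^\ep)_t(x_0,1)$; integrating in $t$ over $[0,1]$ and applying Cauchy--Schwarz together with the normalization $\sum_i\int_{\T^n}\sig_i^{\ep,k}\le 1$ yields
\begin{equation*}
\bigl|\ep(w_k^\ep)_t(x_0,1)\bigr|\le\sum_{i=1}^m\Bigl(\int_0^1\!\!\int_{\T^n}\bigl|\ep(w_i^\ep)_t\bigr|^{2}\,\sig_i^{\ep,k}\,dx\,dt\Bigr)^{\!1/2}.
\end{equation*}

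\emph{Vanishing of the averaged action; main obstacle.} It remains to show $\int_0^1\!\int_{\T^n}|\ep(w_i^\ep)_t|^{2}\sig_i^{\ep,k}\,dx\,dt\to 0$ as $\ep\to 0$ for each $i$, which is the system analog of Lemma~\ref{LEM}. I would first derive, by a componentwise Bernstein argument based on (H1)--(H2), uniform bounds on $\|Dw_i^\ep\|_{\Li(\T^n\times[0,1])}$---the coupling $\sum_j c_{ij}w_j^\ep$ being lower order, it does not affect the principal part of the equation satisfied by $|Dw_i^\ep|^2$---together with a bound of the form $\ep^{4}\!\int\!\int|D^2w_i^\ep|^{2}\sig_i^{\ep,k}=O(1)$. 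Differentiating the regularized $i$-th equation in $t$, pairing with $(w_i^\ep)_t\sig_i^{\ep,k}$, summing in $i$, and using the uniform convexity (H1) produces a positive quadratic form in the Hessians of $(w_i^\ep)_t$ which drives the averaging estimate; the genuinely new feature is the cross-term $\sum_{i,j}\int c_{ij}(w_j^\ep)_t(w_i^\ep)_t\,\sig_i^{\ep,k}$, which using $c_{ii}=-\sum_{j\ne i}c_{ij}$ and $c_{ij}\le 0$ for $i\ne j$ rewrites as $\sum_i\sig_i^{\ep,k}\sum_{j\ne i}|c_{ij}|\bigl((w_i^\ep)_t-(w_j^\ep)_t\bigr)(w_i^\ep)_t$, and is absorbed by the uniform $L^\infty$-bound on $(w_i^\ep)_t$ coming from applying the scalar scheme componentwise. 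Packaging these estimates is the content of what would be the system-tailored Lemma~\ref{lem:couple}, and it is the principal technical obstacle of the proof. Once the averaging estimate is secured, $\ep(w_k^\ep)_t(\cdot,1)\to 0$ uniformly for each $k$, and the stability theory for viscosity solutions of weakly coupled monotone systems yields both the solution $(v_1,\ldots,v_m,c)$ of the ergodic system and the convergence \eqref{lt:system}.
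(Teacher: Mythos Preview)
Your setup (rescaling, regularization, vectorial adjoint with terminal Dirac, positivity and mass-one via (H4), conservation of energy) matches the paper's. The gap is in how you propose to show the averaging estimate. You never introduce the approximate ergodic solution $(v_1^\ep,\ldots,v_m^\ep)$ of (SE)$_\ep$, and that reference point is indispensable: it is precisely in the comparison $H_i(x,Dv_i^\ep)-H_i(x,Dw_i^\ep)\ge D_pH_i(x,Dw_i^\ep)\cdot D(v_i^\ep-w_i^\ep)+\theta|D(v_i^\ep-w_i^\ep)|^2$ that the uniform convexity (H1) produces a favorable quadratic term. Your claim that differentiating the equation in $t$ and pairing with $(w_i^\ep)_t\sig_i^{\ep,k}$ yields, via convexity, ``a positive quadratic form in the Hessians of $(w_i^\ep)_t$'' is incorrect: the time-differentiated equation is linear in $(w_i^\ep)_t$, convexity does not enter, and the only positive term one recovers is $\ep^4\sum_i\int|D(w_i^\ep)_t|^2\sig_i^{\ep,k}$ from the artificial viscosity. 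Carrying that computation through only returns the boundary identity $\ep\,\tfrac12|(w_k^\ep)_t(x_0,1)|^2\le \ep\sum_i\int\tfrac12|(w_i^\ep)_t(x,0)|^2\sig_i^{\ep,k}(x,0)\,dx$, and since $|(w_i^\ep)_t(\cdot,0)|=O(1/\ep)$ this is merely the a~priori bound $\ep|(w_k^\ep)_t|\le C$, not smallness. Likewise, ``absorbing'' the cross-term by the $L^\infty$-bound on $(w_i^\ep)_t$ cannot give $o(1)$, since that bound is $O(1/\ep)$.

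What the paper actually does after conservation of energy is subtract the approximate ergodic equation: $\ep(w_k^\ep)_t(x_0,1)=\int_0^1\!\int_{\T^n}\sum_i[\,\cdot\,]\sig_i^{\ep,k}$ is rewritten as the same expression with $w_i^\ep$ replaced by $v_i^\ep$ (which contributes only $|\ol H_\ep|=O(\ep^2)$) plus an error controlled by $|D(w_i^\ep-v_i^\ep)|$, $\ep^4|D^2(w_i^\ep-v_i^\ep)|$, and the coupling differences $(w_i^\ep-v_i^\ep)-(w_j^\ep-v_j^\ep)$. The genuine system-specific step, Lemma~\ref{lem:couple}(ii), is obtained by multiplying the subtracted $i$-th equation by $(v_i^\ep-w_i^\ep)$ and exploiting (H4) to rewrite $\sum_j c_{ij}(v_i^\ep-w_i^\ep)(v_j^\ep-w_j^\ep)=\sum_j c_{ij}\varphi_j+\tfrac12\sum_j|c_{ij}|[(v_j^\ep-w_j^\ep)-(v_i^\ep-w_i^\ep)]^2$ with $\varphi_j=\tfrac12(v_j^\ep-w_j^\ep)^2$; pairing with $\sig_i^{\ep,k}$, summing and integrating then gives $\int_0^1\!\int_{\T^n}\sum_{i,j}|c_{ij}|[(w_i^\ep-v_i^\ep)-(w_j^\ep-v_j^\ep)]^2\sig_i^{\ep,k}\le C\ep$. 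This is the estimate that replaces your unproven $L^2$-bound on $\ep(w_i^\ep)_t$, and together with part~(i) (the system analog of Lemma~\ref{LEM}(i), proved the same way) it closes the argument.
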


The study of the large-time behavior of solutions to the weakly coupled system \eqref{eq:general-system} of first order cases (i.e. $A_i\equiv 0$ for $1\le i\le m$) was started by \cite{MT1} and \cite{CLLN} 
independently under  rather restrictive assumptions for Hamiltonians. 
Recently, Mitake and Tran \cite{MT3} were able to establish
convergent results  under rather general assumptions on Hamiltonians. 
Their proof is based on the dynamical approach, inspired by the papers \cite{DS, I2008}, together with a new representation formula for the solutions. 
See \cite{N1} for a PDE approach inspired by \cite{BS}.
We also refer to \cite{MT2} for a related work on  homogenization of
weakly coupled systems of Hamilton--Jacobi equations with fast switching rates.

We prove Theorem \ref{mainsist} by following the ideas described 
above. We notice that the coupling terms cause some additional difficulties
and are needed to be handled carefully. 
We in fact establish a new estimate (see  Lemma \ref{lem:couple} (ii) and Subsection \ref{general}) to control 
these coupling terms in order to achieve \eqref{act-prin}.
This is completely different from the single case. 

After this paper was completed, we learnt that Ley and Nguyen \cite{LN} obtained recently related convergence results for some specific degenerate parabolic equations. They however assume rather restrictive and technical conditions on the degenerate diffusions so that they could combine the PDE approaches in \cite{BS} and \cite{BS2} to achieve the results. 
On the other hand, they can deal with a type of fully nonlinear case, which is not included in ours.

\medskip
This paper is organized as follows: 
Sections \ref{deg-HJ} and \ref{sys-HJ} are devoted 
to prove Theorems \ref{main-thm1} and \ref{mainsist}
respectively.
 We provide details and explanations to the method described above
in subsections \ref{sec-r}, \ref{sec3}, \ref{sec6}, and \ref{sec8} and give 
key estimates in subsections \ref{sec4} 
and \ref{sec7}.

 
\section{Degenerate Viscous Hamilton--Jacobi equations} \label{deg-HJ}
In this section we study degenerate viscous Hamilton--Jacobi equations.
To keep the formulation as simple as possible, 
we first consider the equation
\begin{equation} \notag
{\rm (C)} \qquad 
\begin{cases}
u_t + H(x,Du)  = a(x) \Del u & \text{ in } \Q, \\
u(x,0)=u_{0}(x) & \text{ on } \T^n,
\end{cases}
\end{equation}
where $u_0 \in C(\T^n)$. 
Throughout this section we \textit{always} assume that 
$(H,a I_n)\in\cC(\theta, C)$, i.e., $a$ is supposed to be in $C^2(\T^n)$ with $a\ge0$. 
We remark that all the results proved for this particular case
hold with trivial modifications for the general elliptic operator $\tr(A(x) D^2u)$, except estimate \eqref{esti-1} and Lemma \ref{LEM} (ii).
The corresponding results will be considered in the 
end of this section.

The next three propositions concern 
basic existence results, both for (C) and for the associated stationary problem. 
The proofs are standard, hence omitted.
We refer the readers to the companion paper \cite{MT4} by Mitake and Tran
for the detailed proofs of Propositions \ref{am2} and \ref{thm-2}.

\begin{prop} \label{am}
Let $u_0 \in C(\T^n)$. 
There exists a unique solution $u$ 
of \textnormal{(C)} which is uniformly continuous on $\cQ$.
Furthermore, if $u_0\in\Lip(\T^n)$, then $u\in\Lip(\cQ)$. 
\end{prop}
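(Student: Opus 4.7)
The plan is to combine the standard viscosity comparison principle with a vanishing-viscosity approximation and a Bernstein-type Lipschitz estimate. Uniqueness and the $\Li$ bound are classical; the real work lies in the gradient estimate, where the uniform convexity (H1), the quadratic growth (H2), and the degeneracy of $a$ all interact.

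For uniqueness, I would prove comparison for bounded uniformly continuous sub- and supersolutions of (C) by the doubling of variables method. The second-order term $a(x)\Del u$ is handled via the Ishii matrix inequality after writing $A=(\sqrt{a}\,I_n)(\sqrt{a}\,I_n)^T$; crucially $\sqrt{a}\in C^{0,1/2}(\T^n)$ thanks to the standard fact $|Da|^2\le 2\|D^2 a\|_\infty a$ for nonnegative $C^2$ functions. The Hamiltonian terms are dealt with by local Lipschitz continuity of $p\mapsto H(x,p)$ on bounded sets. Uniqueness of a continuous solution follows at once, together with an $\Li$ bound by comparison against the barriers $\pm(\|u_0\|_{\Li}+Kt)$ with $K=\max_{\T^n}|H(\cdot,0)|$.

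For existence with $u_0\in\Lip(\T^n)$, I would regularize by adding an artificial viscosity $\delta \Del u^\delta$ and mollifying the data, so that classical quasilinear parabolic theory yields a smooth solution $u^\delta$ of
\begin{equation*}
u^\delta_t + H(x,Du^\delta) = \bigl(a(x)+\delta\bigr)\Del u^\delta \text{ in } \Q, \qquad u^\delta(\cdot,0) = u_0^\delta.
\end{equation*}
A direct Bernstein computation on $w=\tfrac12|Du^\delta|^2$ at an interior maximum yields
\begin{equation*}
(a+\delta)|D^2u^\delta|^2 \le |Du^\delta|\,|D_x H(x,Du^\delta)| + |Du^\delta|\,|Da|\,|\Del u^\delta|,
\end{equation*}
where the second term is absorbed into half the left-hand side by Young's inequality and $|Da|^2\le Ca$, and the first is controlled by combining (H2) with the dissipative contribution generated by the uniformly convex $H$ (for instance, by working with the modified Bernstein function $w+\beta u^\delta$ or by exploiting $H(x,p)\ge \theta|p|^2 - C$ through the PDE itself). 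This produces a $\delta$-independent Lipschitz bound in $x$; the temporal Lipschitz bound then follows by comparing $u^\delta(\cdot,t+h)$ with $u^\delta(\cdot,t)\pm Mh$ for a suitable $M$. Passage to the limit $\delta\to 0$ via Arzel\`a--Ascoli and the stability of viscosity solutions gives $u\in\Lip(\cQ)$. For general $u_0\in C(\T^n)$, approximate uniformly by Lipschitz data $u_0^k$; the comparison principle gives $\|u^k-u^j\|_{\Li(\cQ)}\le\|u_0^k-u_0^j\|_{\Li}$, so $(u^k)$ is Cauchy in $\BUC(\cQ)$ and its limit is the desired uniformly continuous viscosity solution.

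The principal obstacle is the Bernstein step: the degeneracy of $a$ precludes a naive parabolic Bernstein argument, and the quadratic growth of $D_xH$ in (H2) blocks a simple first-order bound. The two structural inputs $|Da|^2\le Ca$ (from $a\ge 0$ and $a\in C^2$) and the uniform convexity (H1) must be coordinated in tandem to produce a Lipschitz estimate uniform in $\delta$ and thereby prevent finite-time blow-up of $|Du^\delta|$.
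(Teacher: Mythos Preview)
Your outline is sound and matches what the paper has in mind: the authors explicitly omit the proof as ``standard'' (and point to a companion paper only for the ergodic propositions), so there is no alternative argument to compare against. One small correction: the inequality $|Da|^2\le C a$ actually yields $\sqrt{a}\in\Lip(\T^n)$, not merely $C^{0,1/2}$ (this is exactly the content of the paper's Lemma~\ref{lem-a}, citing Stroock--Varadhan), and it is the Lipschitz regularity of the square root that is needed for the Ishii--Lions comparison with $A=\sqrt{a}\,I_n\cdot\sqrt{a}\,I_n$; with that fix your comparison and Bernstein steps go through as written.
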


\begin{prop} \label{am2}
There exists a unique constant $c \in \R$ such that the ergodic problem 
\begin{equation*}
{\rm(E)}\qquad
H(x,Dv(x))=a(x) \Del v +c \quad \text{ in }  \T^{n},
\end{equation*} 
admits a solution $v \in \Lip(\T^n)$.
We call $c$ the ergodic constant of {\rm (E)}.
\end{prop}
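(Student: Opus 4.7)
The plan is to use an ergodic approximation: solve a discounted problem with an additional uniformly elliptic regularization, establish uniform $L^\infty$ and Lipschitz bounds, and then pass to the limit. For $\lambda, \ep > 0$ I would consider the auxiliary equation
\begin{equation*}
\lambda v^{\lambda,\ep} + H(x, Dv^{\lambda,\ep}) = (a(x) + \ep) \Del v^{\lambda,\ep} \quad \text{in } \T^n,
\end{equation*}
which is uniformly elliptic and so admits a unique classical solution by standard quasilinear theory. Since $H(x,0)$ is bounded and the uniform convexity from (H1) makes $H(x,\cdot)$ coercive, for a suitably large $M > 0$ the constants $\pm M/\lambda$ are respectively super- and sub-solutions, giving $\|\lambda v^{\lambda,\ep}\|_{\Li(\T^n)} \le M$ uniformly in $\lambda, \ep$.

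The main obstacle is a Lipschitz estimate uniform in $\lambda$ and $\ep$. I would apply a Bernstein-type argument to $\phi := |Dv^{\lambda, \ep}|^2$: differentiating the equation in $x_j$, multiplying by $2 v^{\lambda,\ep}_{x_j}$ and summing yields an elliptic PDE for $\phi$ in which the favorable quadratic term $2\theta |D^2 v^{\lambda,\ep}|^2$ is supplied by $D^2_{pp} H \ge 2\theta I_n$. The delicate contributions come from (i) the Hamiltonian's $x$-dependence, bounded via (H2) by $|D_x H \cdot Dv^{\lambda,\ep}| \le C(1 + \phi)\sqrt{\phi}$, and (ii) the cross term $(Da \cdot Dv^{\lambda,\ep})\Del v^{\lambda,\ep}$ produced when differentiating $a(x)\Del v$, which cannot be absorbed directly because $a$ may vanish. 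The latter is handled by the standard inequality $|Da|^2 \le Ca$, valid for any nonnegative function $a \in C^2(\T^n)$, which gives $|Da| \le C\sqrt{a+\ep}$ and, via Young's inequality, allows absorbing the cross term into the good second-order contribution plus a constant multiple of $\phi$. Evaluating the resulting differential inequality at an interior maximum of $\phi$ forces $\phi \le K^2$ with $K$ independent of $\lambda, \ep$.

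With these estimates in hand, I would first let $\ep \to 0$ with $\lambda > 0$ fixed: by Arzel\`a--Ascoli and the stability of viscosity solutions, a subsequence of $v^{\lambda, \ep}$ converges uniformly to a Lipschitz viscosity solution $v^\lambda$ of $\lambda v^\lambda + H(x, Dv^\lambda) = a(x) \Del v^\lambda$. Setting $w^\lambda := v^\lambda - v^\lambda(x_0)$ for a fixed $x_0 \in \T^n$, the family $\{w^\lambda\}_{\lambda > 0}$ is uniformly bounded and equi-Lipschitz, while $\{-\lambda v^\lambda(x_0)\}_{\lambda > 0}$ is bounded by the first step. Extracting $\lambda_k \to 0$ so that $w^{\lambda_k} \to v$ uniformly and $-\lambda_k v^{\lambda_k}(x_0) \to c$, stability delivers a viscosity solution $(v, c) \in \Lip(\T^n) \times \R$ of (E).

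Uniqueness of $c$ follows from the comparison principle underlying Proposition \ref{am}. Given two solution pairs $(v_1, c_1)$ and $(v_2, c_2)$, the functions $(x, t) \mapsto v_i(x) - c_i t$ are themselves solutions of (C) with initial data $v_i$. Letting $u$ be the solution of (C) with $u_0 \equiv 0$, comparison yields
\[
|u(x,t) - v_i(x) + c_i t| \le \|v_i\|_{\Li(\T^n)} \qquad \text{for all } (x,t) \in \cQ, \ i = 1,2.
\]
Dividing by $t$ and letting $t \to +\infty$ gives $u(x,t)/t \to -c_i$ for each $i$, forcing $c_1 = c_2$.
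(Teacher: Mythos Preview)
Your overall strategy is sound and is the standard route the paper has in mind (the proof is omitted there as ``standard,'' with a pointer to the viscous cell problem of Proposition~\ref{thm-2} and the Bernstein method). The discounted--plus--viscosity approximation, the passage $\ep\to 0$ then $\lambda\to 0$, and the uniqueness argument via comparison are all correct.

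There is, however, a slip in your Bernstein step. When you differentiate once in $x_j$, multiply by $2v^{\lambda,\ep}_{x_j}$ and sum, the Hessian $D^2_{pp}H$ does \emph{not} appear at all: the only contribution from $H$ is the transport term $D_pH\cdot D\phi$ together with $2D_xH\cdot Dv^{\lambda,\ep}$. The favorable quadratic term $|D^2 v^{\lambda,\ep}|^2$ comes from the elliptic part, with coefficient $2(a+\ep)$, not $2\theta$. This matters, because after absorbing the cross term $(Da\cdot Dv^{\lambda,\ep})\Delta v^{\lambda,\ep}$ via $|Da|^2\le Ca$ you are left, at a maximum of $\phi$, with an inequality of the form
\[
\lambda\phi + c_0(a+\ep)|D^2 v^{\lambda,\ep}|^2 \le C\phi + C(1+\phi)\sqrt{\phi},
\]
which by itself gives no $\lambda$--uniform bound on $\phi$. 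The missing ingredient is the coercivity of $H$ fed back through the \emph{undifferentiated} equation: since $(a+\ep)\Delta v^{\lambda,\ep}=\lambda v^{\lambda,\ep}+H(x,Dv^{\lambda,\ep})\ge \theta\phi-C$, one obtains
\[
(a+\ep)|D^2 v^{\lambda,\ep}|^2\ \ge\ \tfrac{1}{n(a+\ep)}\big((a+\ep)\Delta v^{\lambda,\ep}\big)^2\ \ge\ \tfrac{1}{n(\|a\|_{\Li}+1)}(\theta\phi-C)^2,
\]
and the resulting $\phi^2$--versus--$\phi^{3/2}$ inequality forces $\phi\le K^2$ with $K$ independent of $\lambda,\ep$. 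Equivalently, run Bernstein on $\phi+\alpha v^{\lambda,\ep}$ for suitable $\alpha$ and use the equation to generate a coercive $-\alpha\theta\phi$ term. With this correction your argument goes through.
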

In order to get the existence of the solutions of the ergodic problem (E), 
we use the vanishing viscosity method as described by the following Proposition.

\begin{prop}\label{thm-2}
For every $\varepsilon \in (0,1)$ there exists a unique constant $\ol{H}_\ep$ 
such that the following ergodic problem: 
\begin{equation*} 
{\rm(E)}_{\varepsilon}\qquad
H(x,Dv^\ep)=(\ep^4+a(x)) \Del v^\ep +\ol{H}_\ep \quad \textnormal{in} \ \T^n
\end{equation*}
has a unique solution $v^\ep\in\Lip(\T^n)$ up to some additive constants. In addition,
\begin{equation}\label{bound-5}
|\ol{H}_\ep-c| \le C \ep^2, \qquad \| D v^\ep\|_{L^\infty(\T^n)} \le C,
\end{equation}
for some positive constant $C$ independent of $\ep$. Here $c$ is the ergodic constant of {\rm (E)}.
\end{prop}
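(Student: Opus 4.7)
The plan is to adapt the classical discount approximation for ergodic problems. For each $\lam > 0$ I would first solve the uniformly elliptic equation
$$
\lam w^{\lam,\ep} + H(x, Dw^{\lam,\ep}) = (\ep^4 + a(x))\Del w^{\lam,\ep} \qquad \text{in } \T^n,
$$
which admits a unique smooth periodic solution $w^{\lam,\ep}$ by Schauder theory combined with the continuity method (or Perron's method and the strong comparison principle), and which satisfies $\|\lam w^{\lam,\ep}\|_{\Li(\T^n)} \le \|H(\cdot,0)\|_{\Li(\T^n)}$ by comparison against constant barriers.

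The central step is a uniform Lipschitz bound $\|Dw^{\lam,\ep}\|_{\Li(\T^n)} \le C$ with $C$ independent of $\lam$ and of $\ep \in (0,1)$. I would establish this via the Bernstein method: setting $\phi := |Dw^{\lam,\ep}|^2/2$, differentiating the PDE in $x_k$, multiplying by $w^{\lam,\ep}_{x_k}$, summing, and applying the identity $Dw \cdot D(\Del w) = \Del\phi - |D^2 w|^2$ at an interior maximum of $\phi$ (so $D\phi = 0$ and $\Del\phi \le 0$) leads, after one Cauchy--Schwarz absorption, to an inequality of the form
$$
(\ep^4 + a)|D^2 w^{\lam,\ep}|^2 \le C\,|D_xH(x,Dw^{\lam,\ep})|\,|Dw^{\lam,\ep}| + C\,\frac{|\nabla a|^2}{\ep^4 + a}\,|Dw^{\lam,\ep}|^2.
$$
The main obstacle is the last term, whose coefficient threatens to blow up on the degenerate set $\{a=0\}$ as $\ep \to 0$; this is defused by the elementary inequality $|\nabla a|^2 \le C a$ valid for every nonnegative $a \in C^2(\T^n)$ (its gradient vanishes at every minimum of $a$, and Taylor expansion yields the quadratic control in a neighborhood), so that $|\nabla a|^2/(\ep^4+a) \le C$ uniformly in $\ep$. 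Combining this with (H2) on $|D_xH|$ and with the lower bound $\theta|Dw^{\lam,\ep}|^2 - C \le (\ep^4+a)|\Del w^{\lam,\ep}|$ extracted from the PDE via the coercivity $H(x,p) \ge \theta|p|^2 - C$ encoded in (H1), a short algebraic manipulation closes the loop on $|Dw^{\lam,\ep}|$.

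Once this bound is in hand, $\lam w^{\lam,\ep}$ is bounded and $w^{\lam,\ep} - w^{\lam,\ep}(x_0)$ is equi-Lipschitz, so Arzel\`a--Ascoli extracts a subsequence $\lam_j \to 0$ along which $-\lam_j w^{\lam_j,\ep}(x_0) \to \ol{H}_\ep$ and $w^{\lam_j,\ep} - w^{\lam_j,\ep}(x_0) \to v^\ep$ uniformly. The pair $(\ol{H}_\ep, v^\ep)$ solves $(\text{E})_\ep$ in the viscosity sense, and $v^\ep$ is in fact classical by Schauder bootstrap because $\ep^4 + a \ge \ep^4 > 0$. Uniqueness of $\ol{H}_\ep$ follows from a standard comparison argument applied to the discounted equation, while uniqueness of $v^\ep$ up to additive constants follows from the strong maximum principle, which is available thanks to the same uniform ellipticity.

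Finally, for the rate $|\ol{H}_\ep - c| \le C\ep^2$, I would compare (E) and $(\text{E})_\ep$ by plugging smoothed versions of the solution of one equation as a test function in the other. Smoothing the Lipschitz solution $v$ of (E) at scale $\del$ via inf- or sup-convolution produces a $\del^{-1}$-semiconcave (resp.\ semiconvex) competitor for $(\text{E})_\ep$: convexity of $H$ in $p$ together with Jensen's inequality bounds the resulting $H$-error by $O(\del)$, while the extra $\ep^4\Del$ term acting on such a competitor contributes $O(\ep^4/\del)$. Balancing $\del \sim \ep^4/\del$ forces $\del = \ep^2$ and produces the expected $\sqrt{\ep^4} = \ep^2$ vanishing-viscosity rate characteristic of uniformly convex Hamilton--Jacobi equations; care is needed here precisely because $a$ is allowed to degenerate, which is why the $|\nabla a|^2 \le C a$ estimate reappears when controlling boundary effects of the mollification.
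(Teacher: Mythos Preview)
Your proposal is correct and follows exactly the route the paper indicates: the paper omits the proof of this proposition, calling it standard and referring to the companion paper \cite{MT4}, while singling out the Bernstein method for the uniform Lipschitz bound---which is precisely the core of your argument, down to the use of $|Da|^2 \le Ca$ (the paper's Lemma~\ref{lem-a}) to neutralize the degenerate coefficient. The discount approximation, the Arzel\`a--Ascoli passage, and the strong maximum principle for uniqueness (available because $\ep^4+a>0$) are all standard and match what the authors have in mind.

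One small remark on the rate $|\ol{H}_\ep - c|\le C\ep^2$: your inf/sup-convolution strategy is sound, but for a second-order equation with variable degenerate diffusion the cleaner execution is the doubling-of-variables argument with Ishii's matrix inequality. Writing $a=\sigma^2$ with $\sigma=\sqrt{a}\in\Lip(\T^n)$ (again by $|Da|^2\le Ca$), the second-order cross term $a(x_0)\tr X - a(y_0)\tr Y$ is controlled by $(C/\del)|\sigma(x_0)-\sigma(y_0)|^2\le C\del$, and the extra $\ep^4\tr X$ contributes $C\ep^4/\del$; balancing $\del=\ep^2$ gives the rate. This is equivalent in spirit to what you sketch, and your observation that the $|Da|^2\le Ca$ estimate reappears here is exactly the point.
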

In view of the quadratic or superquadratic growth of the Hamiltonian $H$, 
we can get \eqref{bound-5} by 
the Bernstein method. See the proof of \cite[Proposition 1.1]{MT4} for details.
By passing to some subsequences if necessary,
$v^\ep-v^{\ep}(x_0)$ for a fixed $x_0\in\T^n$  
converges uniformly to a Lipschitz function $v: \T^n \to \R$
which is a solution of (E) as $\ep \to 0$.

We prove Theorem \ref{main-thm1} in a sequence of subsections by using the 
method described in Introduction.

\subsection{Regularizing Process and Proof of Theorem {\rm\ref{main-thm1}}} \label{sec-r}
We only need to study the case where
$c=0$, where $c$ is the ergodic constant, 
by replacing, if necessary, $H$ and $u(x,t)$ by $H-c$ and $u(x, t)+ct$, respectively.
Therefore, from now on, we always assume that $c = 0$ in this section. 
Also, without loss of generality we can assume that $u_0\in\Lip(\T^n)$, 
since the general case $u_0 \in C(\T^n)$ can be obtained by a standard approximation argument.
In particular, thanks to Proposition \ref{am}, we see that $u$ is Lipschitz continuous on $\cQ$.

As stated in Introduction, we consider a rescaled problem.
Setting $u^\ep(x,t)=u(x,t/\ep)$ for $\ep>0$, where $u$ is the solution of (C), 
one can easily check that $u^{\ep}$ satisfies 
\begin{equation*}
{\rm (C)_\ep}\qquad
\begin{cases}
\ep u^\ep_t + H(x,Du^\ep)  = a(x)\Del u^\ep & \text{ in }\Q, \\
u^\ep(x,0)=u_{0}(x) & \text{ on } \T^n. 
\end{cases}
\end{equation*}
Notice however that in this way we do not have a priori uniform Lipschitz estimates on $\ep$, 
since the Lipschitz bounds on $u$ give us that
\begin{equation}\label{bound-2}
\|u^\ep_t\|_{L^\infty(\T^n \times [0,1])} \le C/\ep, 
\qquad 
\|Du^\ep\|_{L^\infty(\T^n \times [0,1])} \le C.
\end{equation}
In general, the function $u^\ep$ is only Lipschitz continuous.
For this reason, we add a viscosity term to (C)$_\ep$, 
and we consider the regularized equation
\begin{equation}\notag
{\rm(A)_{\ep}^{\eta}}\qquad
\begin{cases}
\ep w^{\varepsilon, \eta}_t + H(x,Dw^{\varepsilon, \eta})  
= (a(x) + \eta ) \Delta w^{\varepsilon, \eta} & \text{ in }\Q,  \\
w^{\varepsilon, \eta}(x,0)=u_{0}(x)  & \text{ on } \T^{n}, 
\end{cases}
\end{equation} 
for $\eta>0$.  
The advantage of considering (A)$_{\ep}^{\eta}$
lies in the fact that the solution is smooth, 
and this will allow us to use the nonlinear adjoint method. 
The adjoint equation corresponding to (A)$_{\ep}^{\eta}$ is
\begin{equation} \notag
{\rm (AJ)_\ep^{\eta}} \qquad 
\begin{cases}
-\ep \sig^{\ep,\eta}_t -\text{div}(D_p H(x,Dw^{\ep,\eta}) \sig^{\ep,\eta})  
=\Del \big((a(x)+\eta)\sig^{\ep,\eta}\big) & \text{ in }\T^n \times (0,1),  \\
\sig^{\ep,\eta}(x,1)=\del_{x_0}  & \text{ on } \T^{n},
\end{cases}
\end{equation} 
where $\del_{x_0}$ is the Dirac delta measure at some point $x_0 \in \T^n$.
\begin{lem}[Elementary Properties of $\sig^{\ep,\eta}$]
We have $\sig^{\ep,\eta} \ge 0$ and
$$
\int_{\T^n} \sig^{\ep,\eta}(x,t)\,dx=1 \quad \text{for all} \ t \in [0,1].
$$
\end{lem}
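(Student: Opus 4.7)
The plan is to establish both properties by exploiting the fact that, thanks to the regularizing term $\eta \Delta$, the adjoint equation $\text{(AJ)}_\ep^\eta$ is a backward uniformly parabolic equation (reading time backwards from $t=1$), with smooth coefficients since $w^{\ep,\eta}$ is smooth and $a(x)+\eta \ge \eta > 0$. Consequently, standard theory for Fokker--Planck-type equations with measure initial data yields a unique solution $\sig^{\ep,\eta}$ which is a smooth function on $\T^n \times [0,1)$, and we can do all computations below pointwise.

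For the mass conservation, the plan is simply to integrate $\text{(AJ)}_\ep^\eta$ over $\T^n$. Since both the divergence term $\text{div}(D_p H(x,Dw^{\ep,\eta}) \sig^{\ep,\eta})$ and the Laplacian $\Del((a(x)+\eta)\sig^{\ep,\eta})$ integrate to zero on the torus, one finds
\[
-\ep \,\frac{d}{dt} \int_{\T^n} \sig^{\ep,\eta}(x,t)\,dx = 0 \quad \text{for } t \in [0,1).
\]
Combined with the terminal condition $\sig^{\ep,\eta}(\cdot,1) = \del_{x_0}$, which has total mass one, this yields $\int_{\T^n}\sig^{\ep,\eta}(x,t)\,dx = 1$ for every $t \in [0,1]$.

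For the positivity, I would use a duality argument against the linearized operator $L_{w^{\ep,\eta}}$, which after accounting for the signs reads $L_{w^{\ep,\eta}} f = \ep f_t + D_pH(x,Dw^{\ep,\eta})\cdot Df - (a(x)+\eta)\Del f$. Fix $t_0 \in (0,1)$ and any smooth test function $\phi \ge 0$ on $\T^n$, and solve the forward Cauchy problem $L_{w^{\ep,\eta}} \psi = 0$ on $\T^n \times [t_0,1]$ with $\psi(\cdot,t_0) = \phi$. Since $a(x)+\eta \ge \eta > 0$ this is a (forward) uniformly parabolic linear equation with smooth coefficients, so a classical smooth solution $\psi$ exists and, by the parabolic maximum principle, satisfies $\psi \ge 0$ on $[t_0,1]$. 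The standard integration-by-parts identity (using that $L_{w^{\ep,\eta}}^* \sig^{\ep,\eta} = 0$) gives
\[
\frac{d}{dt}\int_{\T^n} \psi(x,t)\,\sig^{\ep,\eta}(x,t)\,dx = 0 \quad \text{on } [t_0,1],
\]
since all boundary terms vanish on the torus. Evaluating at $t_0$ and at $t=1$ (where $\sig^{\ep,\eta}=\del_{x_0}$) yields $\int_{\T^n}\phi(x)\,\sig^{\ep,\eta}(x,t_0)\,dx = \psi(x_0,1) \ge 0$. As $\phi \ge 0$ is arbitrary, $\sig^{\ep,\eta}(\cdot,t_0) \ge 0$, and since $t_0 \in (0,1)$ was arbitrary, the conclusion follows.

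The only delicate point is justifying the duality identity and the evaluation at $t=1$ in a rigorous way given that $\sig^{\ep,\eta}(\cdot,1)$ is only the Dirac mass; I would handle this either by approximating $\del_{x_0}$ by smooth mollifiers $\rho_\kappa \to \del_{x_0}$, performing the computation for the resulting smooth $\sig^{\ep,\eta,\kappa}$, and passing to the limit $\kappa \to 0$ using parabolic regularity away from $t=1$, or equivalently by noticing that $t \mapsto \langle \psi(\cdot,t), \sig^{\ep,\eta}(\cdot,t)\rangle$ is continuous on $[t_0,1]$ with the terminal value read as pairing a continuous function against a probability measure. I expect this approximation step to be the only genuinely technical ingredient, while the structural identities themselves are straightforward.
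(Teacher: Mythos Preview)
Your proposal is correct and is precisely the standard argument the paper has in mind: the paper itself omits the proof entirely, stating only that ``this is a straightforward result of adjoint operator and easy to check.'' Your integration over $\T^n$ for mass conservation and the duality/maximum-principle argument for nonnegativity (with the mollification of $\delta_{x_0}$ to justify the endpoint evaluation) constitute exactly the routine verification the authors are alluding to.
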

This is a straightforward result of adjoint operator and easy to check.
Heuristically, the vanishing viscosity method gives 
that the rate of convergence of $w^{\ep,\eta}$ to $w^{\ep}$ as $\eta\to0$ is 
\[\sqrt{\text{viscosity coefficient}}/(\text{the coefficient of} \ w_{t}^{\ep,\eta})\] 
and therefore, 
we naturally expect that we need to choose $\eta=\ep^{\al}$ with $\al>2$. 
We mostly choose $\eta=\ep^4$ hereinafter, and therefore 
we specially denote (A)$_{\ep}^{\ep^4}$, (AJ)$_{\ep}^{\ep^4}$ 
by (A)$_{\ep}$,  (AJ)$_{\ep}$
and $w^{\ep, \ep^4}$, $\sig^{\ep,\ep^4}$ by $w^\varepsilon$, $\sig^{\ep}$.

The proof of Theorem \ref{main-thm1} is based 
on  the two following results, Proposition \ref{appro-lem} and Theorem \ref{thm-1}.
\begin{prop}\label{appro-lem}
Let $w^{\ep}$ be the solution of \textnormal{(A)$_{\ep}$}. 
There exists $C>0$ independent of $\ep$ such that
$$
\|w^\ep (\cdot,1) \|_{C^{1}(\T^n)} \leq C, \quad
\|u^\ep(\cdot,1)-w^\ep(\cdot,1)\|_{L^\infty(\T^n)} \le C\ep.  
$$
\end{prop}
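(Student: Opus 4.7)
\textbf{Proof plan for Proposition \ref{appro-lem}.} The plan is to proceed in three steps: establish smoothness of $w^{\ep}$, obtain a uniform $C^1$-bound via a Bernstein argument, and then derive the closeness estimate via a quantitative vanishing viscosity argument, using crucially that $\eta = \ep^4$ is small enough relative to the effective time scale $T = 1/\ep$.

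First, I would reduce to the case where $u_0$ is smooth by a standard mollification argument (the $L^\infty$ stability of (A)$_{\ep}$ and (C)$_\ep$ with respect to initial data is uniform in $\ep$, so approximating $u_0$ by a sequence $u_0^{\delta}\in C^\infty$ and passing to the limit $\delta\to 0$ is harmless). Because the diffusion coefficient $a(x)+\ep^4\ge \ep^4>0$, the equation (A)$_\ep$ is uniformly parabolic, hence $w^\ep$ is a classical $C^\infty$ solution by parabolic Schauder theory. To get the uniform gradient bound, I would apply the Bernstein method to $\phi := |Dw^\ep|^2$. Differentiating (A)$_\ep$ in $x_k$, multiplying by $w^\ep_{x_k}$, summing, and evaluating at a maximum point of $\phi$ on $\T^n\times[0,1]$ (either at $t=0$ where $\phi\le \|Du_0\|_\infty^2$, or at an interior maximum where $\phi_t\ge 0$, $D\phi = 0$, $\Delta\phi\le 0$) yields an inequality in which the uniform convexity (H1) combined with the growth condition (H2) (specifically, $|D_x H|\le C(1+|p|^2)$ paired with $|D^2_{pp}H|\ge 2\theta I$) allows the bad $|Dw^\ep|^3$ term to be absorbed. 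The outcome is $\|Dw^\ep\|_{L^\infty(\T^n\times[0,1])}\le C$ independent of $\ep$, which together with the uniform continuity in $t$ inherited from $\ep\|w^\ep_t\|_\infty\le C$ (standard maximum-principle argument applied to the time-differentiated equation, using $u_0$ smooth) gives the $C^1$ bound at $t=1$.

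For the second claim, I would view $w^\ep$ as a classical solution of (C)$_\ep$ with inhomogeneous source $\ep^4\Delta w^\ep$, and then use the viscosity solution comparison principle for (C)$_\ep$ in conjunction with the classical Crandall--Lions vanishing viscosity rate. Rescaling time by $s=t/\ep$ makes (C)$_\ep$ coincide with (C) and transforms (A)$_\ep$ into a perturbation of (C) by the extra viscosity $\ep^4\Delta$ on the long time interval $[0,1/\ep]$; the classical estimate then gives
\[
\|u^\ep(\cdot,1)-w^\ep(\cdot,1)\|_{L^\infty(\T^n)}\,\le\, C\sqrt{\ep^{4}\cdot (1/\ep)}\,=\,C\ep^{3/2}\,\le\, C\ep.
\]
To make this rigorous in the degenerate case $a\ge 0$, I would carry out the standard doubling-variables argument: consider $\Phi(x,y,t):=u^\ep(x,t)-w^\ep(y,t)-|x-y|^2/(2\del)-\lam t$ on $\T^n\times\T^n\times[0,1]$, use the viscosity sub/super-solution inequalities for $u^\ep$ and $w^\ep$, exploit the uniform $C^1$ bound on $w^\ep$ from Step~2 to control the Hamiltonian differences linearly in $|x-y|/\del$, and optimize the penalization parameter $\del$ against $\eta = \ep^4$ and the total time (after rescaling).

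The main obstacle, and the reason $\eta=\ep^4$ is selected, is the delicate balance in this last step: the vanishing viscosity correction must beat the time interval $T=1/\ep$ introduced by the rescaling, so that the accumulated error over time $t=1$ comes out exactly of order $\ep$. The Bernstein gradient bound of Step~2 is robust (and well adapted to the convex quadratic-growth structure), while the comparison/doubling argument of Step~3 is the content that really depends on the specific scaling $\eta=\ep^4$ and would fail for $\eta=\ep^{\al}$ with $\al\le 2$.
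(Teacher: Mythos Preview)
Your plan is sound and yields the proposition, but it follows a different route from the paper's for the second estimate. For the $C^1$ bound, both you and the paper invoke a Bernstein argument (the paper only sketches it). For the closeness estimate, however, the paper does \emph{not} use doubling of variables; instead it differentiates (A)$_\ep^\eta$ with respect to the viscosity parameter $\eta$, multiplies by the adjoint solution $\sigma^{\ep,\eta}$, integrates by parts, and obtains $\ep|w^{\ep,\eta}_\eta(x_0,1)|\le C/\sqrt{\eta}$ via the Hessian bound
\[
\int_0^1\!\!\int_{\T^n}(a+\eta)\,|D^2w^{\ep,\eta}|^2\,\sigma^{\ep,\eta}\,dx\,dt\le C,
\]
from which $\|w^{\ep,\eta}(\cdot,1)-u^\ep(\cdot,1)\|_\infty\le C\sqrt{\eta}/\ep$, and setting $\eta=\ep^4$ gives $C\ep$. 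The paper chooses this adjoint-based proof deliberately: the displayed estimate (labelled \eqref{esti-1}) is reused crucially in the proof of the key Lemma~\ref{LEM}. Your comparison-principle route is more elementary and self-contained, but would force you to establish \eqref{esti-1} separately later.

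One correction to your computation: the rate $C\sqrt{\eta T}$ you quote is the Crandall--Lions rate for $x$-\emph{independent} Hamiltonians. With $x$-dependent $H$ and $a$ the doubling argument picks up an additional error of order $\delta$ per unit time from $H(x^*,p)-H(y^*,p)$ and from $a(x^*)\,\tr X-a(y^*)\,\tr Y$ (controlled via $\sqrt{a}\in\Lip$, i.e.\ Lemma~\ref{lem-a}); after optimising in $\delta$ the rate is $CT\sqrt{\eta}$, not $C\sqrt{\eta T}$. With $T=1/\ep$ and $\eta=\ep^4$ this gives exactly $C\ep$ (not $\ep^{3/2}$), so your conclusion still stands but the margin is tighter than you claim---and this is precisely why the paper's heuristic preceding the proof says the rate is $\sqrt{\eta}/\ep$, forcing $\alpha>2$ in $\eta=\ep^{\alpha}$.
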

The proof of Proposition \ref{appro-lem} can be derived using standard arguments.
Nevertheless, we give the proof below, since some of the estimates involved will be used later.

Before starting the proof, 
we state a basic property of the function $a\in C^{2}(\T^n)$.
\begin{lem} \label{lem-a}
There exists a constant $C>0$ such that
\begin{equation}\label{a-a}
|Da(x)|^2 \le C a(x) \quad \text{for all} \ x \in \T^n.
\end{equation}
\end{lem}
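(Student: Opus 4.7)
The plan is to deduce \eqref{a-a} from a classical Glaeser-type inequality for nonnegative $C^2$ functions. The underlying intuition is that at a point $x$ where $a(x)$ is small but $|Da(x)|$ is large, the linearization of $a$ would drop below zero within a short distance; the $C^2$ Hessian bound controls the error in this linearization, and nonnegativity of $a$ then forces $|Da(x)|^2$ to be dominated by $a(x)$.

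Concretely, I will first invoke the compactness of $\T^n$ together with $a \in C^2(\T^n)$ to set $M := \|D^2 a\|_{L^\infty(\T^n)} < \infty$, and claim that \eqref{a-a} holds with $C = 2M$. Lifting $a$ to a $\Z^n$-periodic $C^2$ function on $\R^n$ (which I still denote by $a$), I fix $x \in \T^n$ and apply Taylor's theorem with Lagrange remainder to write, for every $y \in \R^n$,
\[
a(y) \;=\; a(x) + Da(x)\cdot(y-x) + \tfrac{1}{2}\,(y-x)^\top D^2 a(\xi)\,(y-x)
\]
for some $\xi$ on the segment from $x$ to $y$; bounding the quadratic remainder by $M|y-x|^2/2$ yields
\[
a(y) \;\le\; a(x) + Da(x)\cdot(y-x) + \tfrac{M}{2}\,|y-x|^2.
\]
Combining this upper bound with the hypothesis $a(y) \ge 0$ gives
\[
a(x) + Da(x)\cdot(y-x) + \tfrac{M}{2}\,|y-x|^2 \;\ge\; 0 \qquad \text{for every } y \in \R^n.
\]

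Assuming $Da(x) \neq 0$ (the alternative being trivial), I then parametrize $y = x + tv$ with $v = -Da(x)/|Da(x)|$ and $t \in \R$, reducing the previous inequality to
\[
\tfrac{M}{2}\,t^2 \;-\; |Da(x)|\,t \;+\; a(x) \;\ge\; 0 \qquad \text{for every } t \in \R.
\]
A real quadratic in $t$ that is nonnegative on all of $\R$ must have nonpositive discriminant, so $|Da(x)|^2 - 2 M\,a(x) \le 0$, which is exactly \eqref{a-a} with $C = 2M$.

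The only conceptual subtlety, and the one place I would be careful, is orienting Taylor's theorem in the correct direction: I need the upper bound $a(y) \le a(x) + Da(x)\cdot(y-x) + \tfrac{M}{2}|y-x|^2$ (so that the pointwise nonnegativity $a(y) \ge 0$ transfers to a genuine nonnegativity of the quadratic expression), not the corresponding lower bound, which would be vacuous given $a \ge 0$. Once this orientation is fixed, the discriminant argument closes the proof immediately, and no further work is needed.
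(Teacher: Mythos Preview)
Your argument is correct: the Taylor-expansion-plus-discriminant computation is the standard self-contained proof of the Glaeser inequality $|Da|^2 \le 2\|D^2 a\|_{L^\infty}\,a$ for nonnegative $C^2$ functions, and you have oriented the inequality the right way (the upper bound $a(y)\le a(x)+Da(x)\cdot(y-x)+\tfrac{M}{2}|y-x|^2$ combined with $a(y)\ge 0$ forces the quadratic in $t$ to be nonnegative, hence its discriminant is nonpositive). The only cosmetic point is the degenerate case $M=0$, where the ``quadratic'' collapses to a linear function of $t$ and nonnegativity directly forces $Da(x)=0$; this is harmless.

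The paper proceeds differently: rather than arguing from scratch, it invokes \cite[Theorem~5.2.3]{SV} to assert that $a^{1/2}\in\Lip(\T^n)$, and then observes that for $a(x)>0$ one has $D(a^{1/2})(x)=Da(x)/(2a^{1/2}(x))$, so boundedness of $D(a^{1/2})$ immediately gives $|Da|^2\le C a$. In effect, the paper outsources the work to the Stroock--Varadhan reference (whose proof is essentially the same Taylor/discriminant computation you carried out), while your argument unpacks that black box and is fully self-contained. Your route costs a few more lines but removes the external dependence; the paper's route is terser but relies on the reader accepting the cited result.
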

\begin{proof}
In view of \cite[Theorem 5.2.3]{SV}, $a^{1/2} \in \Lip(\T^n)$.
It is then immediate to get \eqref{a-a} by noticing that, for $a(x)>0$,
\[
D\left (a^{1/2} \right)(x)=\dfrac{Da(x)}{2a^{1/2}(x)}. \qedhere
\]
\end{proof}

\begin{proof}[Proof of Proposition {\rm \ref{appro-lem}}]
Since we are assuming that the ergodic constant for (E) is $0$ now 
and $H$ is quadratic or superquadratic on $p$, we can easily get the first estimate. 
We only prove the second one.

Let $w^{\ep, \eta}$ be the solution of (A)$_{\ep}^{\eta}$
and set $\varphi(x,t)=|Dw^{\ep, \eta}|^2/2$.  
Then, $\varphi$ satisfies 
$$
\ep\varphi_t + D_p H \cdot D\varphi +D_xH \cdot Dw^{\ep,\eta} 
= (\eta+a)(\Del \varphi  - |D^2 w^{\ep,\eta}|^2)
+(Da\cdot Dw^{\ep,\eta}) \Del w^{\ep,\eta}.
$$
We next notice that for $\del>0$ small enough, in light of Lemma \ref{lem-a},
\begin{equation}\label{rem:point}
a_{x_k} w^{\ep,\eta}_{x_k} \Del w^{\ep,\eta} \le 
C |Da| \cdot |\Del w^{\ep,\eta}|
\le \dfrac{C}{\del} + \del |Da|^2 |D^2 w^{\ep,\eta}|^2
\le C + \dfrac{1}{2} a |D^2 w^{\ep,\eta}|^2. 
\end{equation}
Hence
\begin{equation}\notag
\ep\varphi_t + D_p H \cdot D\varphi +\dfrac{1}{2} (\eta+a)|D^2 w^{\ep,\eta}|^2  
\le (\eta+a) \Del \varphi +C.
\end{equation}
Multiply the above by $\sig^\ep$ and integrate over $[0,1] \times \T^n$ to yield 
\begin{equation}\label{esti-1}
\int_0^1 \int_{\T^n} 
( a(x) + \eta ) |D^2 w^{\ep,\eta}|^2
\sig^{\ep,\eta}\,dx\,dt
\le C
\end{equation}
for some $C>0$.

Note that $w^{\ep,\eta}$ is differentiable with respect to $\eta$ by standard regularity results for elliptic equations.   
Differentiating the equation in (A)$_{\ep}^{\eta}$ with respect to 
$\eta$, we get  
$$
\varepsilon \left(w^{\varepsilon,\eta}_{\eta}\right)_t + D_p H(x,D w^{\varepsilon,\eta}) \cdot D w^{\varepsilon,\eta}_{\eta}
= \Delta w^{\varepsilon,\eta} + ( a(x) + \eta ) \Delta w^{\varepsilon,\eta}_{\eta}, \ 
\text{ in } \T^n, 
$$
where $f_{\eta}$ denotes the derivative of a function $f$ with respect 
to the parameter $\eta$. 
Multiplying the above by $\sigma^{\varepsilon, \eta}$ 
and integrating by parts on $[0,1]\times\T^n$ yield
\begin{align*}
\varepsilon w^{\varepsilon,\eta}_{\eta} (x_0,1) 
= \int_0^1 \int_{\T^n} \Delta w^{\varepsilon,\eta} \sigma^{\varepsilon, \eta} \, dx \, dt,
\end{align*}
where we used the fact that $w^{\varepsilon,\eta}_{\eta} (x,0) \equiv 0$.
Thanks to \eqref{esti-1}, by the H\"{o}lder inequality
$$
\varepsilon | w^{\varepsilon,\eta}_{\eta} (x_0,1) |
\leq C \left( \int_0^1 \int_{\T^n} |D^2 w^{\varepsilon,\eta}|^2 \sigma^{\varepsilon,\eta} \, dx \, dt \right)^{1/2}
\leq \frac{C}{\sqrt{\eta}}.
$$ 
By choosing properly the point $x_0$ we have thus
$$
\| w^{\varepsilon,\eta}_{\eta} (\cdot, 1) \|_{L^{\infty}(\T^n)} \leq \frac{C}{\varepsilon \sqrt{\eta}},
$$
which gives 
$$
\| w^{\varepsilon,\eta} (\cdot, 1) - u^{\varepsilon} (\cdot, 1) \|_{L^{\infty}(\T^n)} 
=\|w^{\ep,\eta}(\cdot,1)-w^{\ep,0}(\cdot,1)\|_{L^{\infty}(\T^n)}
\leq \frac{C\sqrt{\eta}}{\varepsilon}.
$$
Finally, observing that $w^{\varepsilon} = w^{\varepsilon, \varepsilon^4}$,
choosing $\eta = \varepsilon^4$ we get the result.  

\end{proof}

Next theorem gives \eqref{act-prin} in the special case of problem (A)$_{\ep}$. 
\begin{thm}\label{thm-1}
We have
$$
\lim_{\ep \to 0}\ep \|w^\ep_t(\cdot,1)\|_{L^\infty(\T^n)} = 0.
$$
More precisely, there exists a positive constant $C$, independent of $\ep$, such that
$$
\ep \|w^\ep_t(\cdot,1)\|_{L^\infty(\T^n)}=
\| H(\cdot,Dw^\ep(\cdot,1))-(\ep^4+a(\cdot)) \Delta w^\ep(\cdot,1)\|_{L^\infty(\T^n)}
\le C \ep^{1/4}.
$$
\end{thm}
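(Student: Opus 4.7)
The equality between the two $L^\infty$ quantities in the statement follows pointwise from (A)$_\ep$ evaluated at $t = 1$; the substance is therefore to prove that $|\ep w^\ep_t(x_0,1)| \le C\ep^{1/4}$ for each fixed $x_0 \in \T^n$, with $C$ independent of $x_0$ and $\ep$. The plan is to use the nonlinear adjoint method throughout. Fix $x_0$ and let $\sig^\ep$ solve (AJ)$_\ep$ with terminal data $\del_{x_0}$. Differentiating (A)$_\ep$ in $t$ shows that $w^\ep_t$ lies in the kernel of the linearized operator $L_{w^\ep}$, so adjoint duality gives the conservation of energy: $t \mapsto \int_{\T^n} \ep w^\ep_t(x,t) \sig^\ep(x,t)\,dx$ is constant on $[0,1]$, and since $\sig^\ep(\cdot,1) = \del_{x_0}$ this constant equals $\ep w^\ep_t(x_0,1)$. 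Integrating in $t$ and substituting (A)$_\ep$ yields the representation
\[
\ep w^\ep_t(x_0,1) = -\int_0^1\int_{\T^n}\bigl[H(x,Dw^\ep) - (a+\ep^4)\Del w^\ep\bigr]\sig^\ep\,dx\,dt. \qquad (\star)
\]

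Next I would compare with the ergodic problem: with $v^\ep$ from Proposition \ref{thm-2} and $\psi:=w^\ep-v^\ep$, I subtract (E)$_\ep$ from (A)$_\ep$. The uniform convexity (H1) in the Taylor form
\[
H(x,Dv^\ep) \ge H(x,Dw^\ep) + D_pH(x,Dw^\ep)\cdot(Dv^\ep-Dw^\ep) + \theta|D\psi|^2
\]
then implies, after using both equations to eliminate the $\ep w^\ep_t$-contribution, that $L_{w^\ep}[\psi] \ge \theta|D\psi|^2 - \ol{H}_\ep$. Pairing with $\sig^\ep$ and invoking the adjoint identity
\[
\int_0^1\int_{\T^n} L_{w^\ep}[\psi]\,\sig^\ep\,dx\,dt = \ep\psi(x_0,1) - \ep\int_{\T^n}\psi(x,0)\sig^\ep(x,0)\,dx,
\]
together with the uniform bounds $\|\psi\|_{\Li(\cQ)}\le C$ (from Propositions \ref{am} and \ref{thm-2}) and $|\ol{H}_\ep|\le C\ep^2$, produces the key estimate (Lemma \ref{LEM}):
\[
\int_0^1\int_{\T^n} |Dw^\ep-Dv^\ep|^2\sig^\ep\,dx\,dt \le C\ep.
\]

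To upgrade this $L^2(\sig^\ep)$-smallness of $D\psi$ into a pointwise bound at $(x_0,1)$ with rate $\ep^{1/4}$, return to $(\star)$ and use (E)$_\ep$ to rewrite
\[
\ep w^\ep_t(x_0,1) + \ol{H}_\ep = -\int_0^1\int_{\T^n}\bigl\{[H(x,Dw^\ep)-H(x,Dv^\ep)] - (a+\ep^4)\Del\psi\bigr\}\sig^\ep\,dx\,dt.
\]
The mean value theorem together with Cauchy--Schwarz against the key estimate controls the $H$-difference term by $O(\sqrt{\ep})$. The genuine obstacle, and presumably the source of the exponent $1/4$, is the second-order term $\int_0^1\int_{\T^n}(a+\ep^4)\Del\psi\,\sig^\ep$: the Bernstein bound \eqref{esti-1} only furnishes the uniform (not small) control $\int_0^1\int_{\T^n}(a+\ep^4)|D^2w^\ep|^2\sig^\ep\le C$, and the stationary analogue for $v^\ep$ carries no $\ep$-gain. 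The rate $\ep^{1/4}$ should emerge by combining Lemma \ref{LEM}, estimate \eqref{esti-1} and its stationary counterpart, a second integration by parts using (AJ)$_\ep$ to recast the $(a+\ep^4)\Del\psi$-contribution via time derivatives of $\sig^\ep$, and an interpolation between these $L^2$-estimates and the uniform $L^\infty$-bounds on $Dw^\ep$ and $Dv^\ep$. I expect this closing balancing act to be the main obstacle in the proof.
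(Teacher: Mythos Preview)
Your overall architecture is exactly that of the paper: fix $x_0$, use conservation of energy to write $\ep w^\ep_t(x_0,1)$ as the space--time average of $H(x,Dw^\ep)-(\ep^4+a)\Del w^\ep$ against $\sig^\ep$, subtract (E)$_\ep$, and bound the Hamiltonian difference by $C\sqrt{\ep}$ via the first part of Lemma~\ref{LEM} and Cauchy--Schwarz. That part is correct and complete.

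The gap is in how you close the second-order term $\int_0^1\int_{\T^n}(a+\ep^4)|\Del\psi|\,\sig^\ep$. Your proposed route---a further integration by parts against (AJ)$_\ep$ to produce time derivatives of $\sig^\ep$, followed by an interpolation with $L^\infty$ gradient bounds---is not what the paper does, and it is not clear that it would work: integrating $(a+\ep^4)\Del\psi$ by parts against $\sig^\ep$ recreates first-order terms in $\psi$ plus terms involving $D\sig^\ep$, and the latter are not controlled in $L^1$ uniformly in $\ep$. No interpolation between \eqref{esti-1} and the gradient bounds will manufacture the needed smallness, because \eqref{esti-1} carries no $\ep$-gain.

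The paper's mechanism is simpler and more direct. One splits $(a+\ep^4)|\Del\psi| \le a|\Del\psi|+\ep^4|\Del\psi|$ and applies Cauchy--Schwarz to each piece using the \emph{two} second-order estimates of Lemma~\ref{LEM}: from (i), $\ep^7\int_0^1\int_{\T^n}|D^2\psi|^2\sig^\ep \le C$ gives $\ep^4\int|\Del\psi|\sig^\ep \le C\ep^{1/2}$; from (ii), $\int_0^1\int_{\T^n} a^2|D^2\psi|^2\sig^\ep \le C\sqrt{\ep}$ gives $\int a|\Del\psi|\sig^\ep \le C\ep^{1/4}$. The exponent $1/4$ thus comes precisely from the $a$-weighted Hessian estimate Lemma~\ref{LEM}(ii), whose proof (multiply the equation for $|D\psi|^2$ by $a(x)$ and exploit $|Da|^2\le Ca$ from Lemma~\ref{lem-a}) is the genuine new ingredient you are missing. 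You cite Lemma~\ref{LEM} by name but then argue as if only its first-derivative part were available; once you take (ii) seriously, the ``balancing act'' you anticipate collapses to two lines.
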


The proof of Theorem \ref{thm-1} is postponed to the end of this section.
We can now give the proof of Theorem \ref{main-thm1}.

\begin{proof}[Proof of Theorem {\rm\ref{main-thm1}}]
By Proposition \ref{appro-lem}, 
we can choose a sequence $\{\ep_m\} \to 0$ such that
$\{w^{\ep_m}(\cdot,1)\}$ converges uniformly to a continuous function $v$.
In view of Theorem \ref{thm-1}, $v$ is a solution of (E), and thus a (time independent) 
solution of the equation in (C)$_\ep$.
We let $t_m=1/\ep_m$ and use Proposition \ref{appro-lem} to deduce that
$$
\|u(\cdot,t_m)- v\|_{\Li(\T^n)}\to0  \text{ as }m\to \infty.
$$
Let us show that the limit does not depend on the sequence $\{ t_m \}_{m \in \mathbb{N}}$.
Now, for any $x\in \T^n$, $t>0$ such that $t_m \le t <t_{m+1}$, we use the comparison principle to yield that
$$
|u(x,t)-v(x)| \le \| u(\cdot, t_m +(t-t_m))-v(\cdot)\|_{L^\infty(\T^n)}
\le  \| u(\cdot, t_m)-v(\cdot)\|_{L^\infty(\T^n)}.
$$
Thus,
$$
\lim_{t \to \infty} |u(x,t)-v(x)| \le \lim_{m\to \infty}  \| u(\cdot, t_m)-v(\cdot)\|_{L^\infty(\T^n)}=0,
$$
which gives the conclusion.
\end{proof}

\subsection{Convergence Mechanisms: Degenerate Equations} \label{sec4}

We show now the following key lemma,
which provides integral bounds on first and second order derivatives 
of the difference $w^\ep-v^\ep$ on the support of $\sigma^\ep$
and $a$, 
where $v^{\ep}$ is a solution of (E)$_{\ep}$, and 
$w^{\ep}$ and $\sig^{\ep}$ are solutions of 
\begin{align*}
{\rm(A)_{\ep}}&\qquad
\begin{cases}
\ep w^{\varepsilon}_t + H(x,Dw^{\varepsilon})  
= (a(x) + \ep^4 ) \Delta w^{\varepsilon} & \text{ in }\Q,  \\
w^{\varepsilon}(x,0)=u_{0}(x)  & \text{ on } \T^{n}, 
\end{cases}\\
{\rm (AJ)_\ep}& \qquad 
\begin{cases}
-\ep \sig^{\ep}_t -\text{div}(D_p H(x,Dw^{\ep}) \sig^{\ep})  
=\Del \big((a(x)+\ep^4)\sig^{\ep}\big) & \text{ in }\T^n \times (0,1),  \\
\sig^{\ep}(x,1)=\del_{x_0} & \text{ on } \T^{n},  
\end{cases}
\end{align*}
respectively.

\begin{lem}[Key Estimates] \label{LEM}
There exists a positive constant $C$, independent of $\ep$, such that
the following hold{\rm:}\\
{\rm(i)}\, 
$\displaystyle{\int_0^1 \int_{\T^n} 
\left( \frac{1}{\ep}|D(w^\ep-v^\ep)|^2
+\ep^7|D^2(w^\ep-v^\ep)|^2 \right) \sig^\ep\,dx\,dt
\le C,}$\\ 
{\rm(ii)}\, 
$\displaystyle\int_{0}^{1} \int_{\T^n} a^2 (x) |D^2(w^\ep-v^\ep)|^2 \sig^{\ep}\,dx\,dt \le C \sqrt{\ep}$.
\end{lem}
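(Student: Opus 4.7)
The plan is to set $\psi := w^\ep - v^\ep$ and subtract $(E)_\ep$ from $(A)_\ep$, which yields
\[
\ep \psi_t + [H(x,Dw^\ep) - H(x,Dv^\ep)] = (a+\ep^4)\Del \psi - \ol{H}_\ep.
\]
For the first bound in (i), the uniform convexity $D^2_{pp} H \ge 2\theta I_n$ gives the pointwise inequality
\[
\theta|D\psi|^2 \le \ep\psi_t + D_pH(x,Dw^\ep)\cdot D\psi - (a+\ep^4)\Del\psi + \ol{H}_\ep.
\]
I would multiply by $\sig^\ep \ge 0$ and integrate over $\T^n \times (0,1)$. After integration by parts in $x$ and $t$, the three derivative terms on the right combine, up to boundary contributions at $t=0,1$, into $-\int_0^1\!\int_{\T^n}\psi\,[\ep\sig^\ep_t + \Div(D_pH\,\sig^\ep) + \Del((a+\ep^4)\sig^\ep)]\,dx\,dt$, which vanishes identically by $(AJ)_\ep$. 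Using uniform boundedness of $\psi$ together with $|\ol{H}_\ep| \le C\ep^2$ from Proposition \ref{thm-2}, the surviving contributions are $O(\ep)$, so $\theta \int_0^1\!\int_{\T^n} |D\psi|^2 \sig^\ep\,dx\,dt \le C\ep$, which is the first estimate in (i).

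For the second bound in (i), I would use Bernstein-type estimates for both $w^\ep$ and $v^\ep$. The estimate $\int_0^1\!\int_{\T^n}(a+\ep^4)|D^2 w^\ep|^2 \sig^\ep\,dx\,dt \le C$ already appears in the proof of Proposition \ref{appro-lem} via \eqref{esti-1}. For $v^\ep$, differentiating $(E)_\ep$ in $x_k$, multiplying by $v^\ep_{x_k}$ and summing over $k$ yields a Bernstein identity for $\tilde\varphi = |Dv^\ep|^2/2$. When integrated against $\sig^\ep$, time-independence of $\tilde\varphi$ together with two spatial integrations by parts and $(AJ)_\ep$ integrated in $t$ reduce the $\int_0^1\!\int_{\T^n}(a+\ep^4)\Del\tilde\varphi\,\sig^\ep$ contribution to $\int_0^1\!\int_{\T^n} D\tilde\varphi \cdot D_pH(x,Dw^\ep)\,\sig^\ep$ up to $O(\ep)$ boundary terms. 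Combining with the $-D_pH(x,Dv^\ep)\cdot D\tilde\varphi$ term of the Bernstein identity leaves the cross term $\int_0^1\!\int_{\T^n}D\tilde\varphi\cdot [D_pH(x,Dw^\ep) - D_pH(x,Dv^\ep)]\,\sig^\ep$, which by Lipschitz continuity of $D_pH$ and Cauchy--Schwarz against the first bound yields, after Young absorption, $\int_0^1\!\int_{\T^n}(a+\ep^4)|D^2 v^\ep|^2 \sig^\ep \le C\ep^{-3}$, and hence $\int_0^1\!\int_{\T^n}|D^2 v^\ep|^2 \sig^\ep \le C\ep^{-7}$ by $\ep^4 \le a+\ep^4$. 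Coupled with the $w^\ep$-estimate and $|D^2\psi|^2 \le 2(|D^2w^\ep|^2+|D^2v^\ep|^2)$, this gives the second bound in (i).

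The estimate (ii) is the main obstacle. My plan is to rewrite the $\psi$-equation as $(a+\ep^4)\Del\psi = \ep\psi_t + [H(x,Dw^\ep)-H(x,Dv^\ep)] + \ol{H}_\ep$ and multiply by $a\Del\psi\,\sig^\ep$; the left-hand side is then bounded below by $\int_0^1\!\int_{\T^n}a^2(\Del\psi)^2\sig^\ep\,dx\,dt$. The $\sqrt\ep$-smallness enters through Cauchy--Schwarz on the cross term, using $|H(x,Dw^\ep)-H(x,Dv^\ep)|\le C|D\psi|$ and the first bound of (i):
\[
\Bigl|\int_0^1\!\int_{\T^n}[H(x,Dw^\ep)-H(x,Dv^\ep)]\,a\Del\psi\,\sig^\ep\,dx\,dt\Bigr| \le C\sqrt\ep\,\Bigl(\int_0^1\!\int_{\T^n}a^2(\Del\psi)^2\sig^\ep\,dx\,dt\Bigr)^{1/2},
\]
which is absorbed into the left-hand side by Young's inequality. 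The $\ep\psi_t$ contribution is handled via integration by parts in both $x$ and $t$, using the adjoint equation for $\sig^\ep_t$, the uniform bound $\|\ep w^\ep_t\|_{\Li(\T^n\times[0,1])} \le C$ inherited from Proposition \ref{am}, and critically the structural relation $|Da|^2 \le Ca$ from Lemma \ref{lem-a}, which supplies the cancellation needed to avoid any catastrophic division by $a$ on the degenerate set. The final technical point, which I expect to constitute the main hurdle, is the passage from a weighted $L^2$-bound on $\Del\psi$ to the required bound on the full Hessian $|D^2\psi|^2$: the flat identity $\int|\Del u|^2 = \int|D^2 u|^2$ fails under the non-flat weight $\sig^\ep$, and the commutator terms produced by an additional integration by parts must be absorbed using both $|Da|^2 \le Ca$ and the second bound in (i).
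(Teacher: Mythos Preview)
Your argument for the first bound in (i) coincides with the paper's. For the second bound in (i) you take a minor variant --- running Bernstein on $v^\ep$ alone and then combining with \eqref{esti-1} --- whereas the paper runs Bernstein directly on $\varphi := |D(v^\ep - w^\ep)|^2/2$; both routes yield the $\ep^{-7}$ bound.

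The real gap is in (ii). Multiplying the $\psi$-equation by $a\,\Del\psi\,\sig^\ep$ does not produce a test function to which the linearized operator $\ep\partial_t + D_pH(x,Dw^\ep)\cdot D - (a+\ep^4)\Del$ is applied, and that structure is the \emph{only} mechanism by which integration against $\sig^\ep$ collapses terms. Two concrete failures follow. First, the $\ep\psi_t$ contribution: even granting $\|\ep\psi_t\|_{L^\infty}\le C$, Cauchy--Schwarz gives only $\bigl|\int_0^1\!\int_{\T^n}\ep\psi_t\,a\,\Del\psi\,\sig^\ep\bigr|\le C\bigl(\int_0^1\!\int_{\T^n} a^2(\Del\psi)^2\sig^\ep\bigr)^{1/2}$, hence an $O(1)$ bound after absorption, not $O(\sqrt\ep)$; any integration by parts to do better forces either fourth derivatives of $\psi$ or factors of $D\sig^\ep$, neither of which you control. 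Second, the passage from $a^2(\Del\psi)^2$ to $a^2|D^2\psi|^2$: the flat identity requires two integrations by parts, and under the weight $a^2\sig^\ep$ the commutators involve $D\sig^\ep$, for which no estimate is available --- the adjoint method yields information on $\sig^\ep$ only in duality, never pointwise on its derivatives. Invoking $|Da|^2\le Ca$ and the second bound of (i) does not touch this difficulty.

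The paper sidesteps both obstacles by never isolating $\Del\psi$. It takes the Bernstein inequality for $\varphi=|D\psi|^2/2$ --- which already carries the good term $(\ep^4+a/2)|D^2\psi|^2$ on the left --- and multiplies it by $a(x)$. This produces $a^2|D^2\psi|^2$ directly, while the principal part becomes $\ep(a\varphi)_t + D_pH\cdot D(a\varphi) - (a+\ep^4)\Del(a\varphi)$ plus lower-order errors controlled via $|Da|^2\le Ca$ and the first bound of (i). The $\sqrt\ep$ enters through the Young split
\[
a\,|D^2v^\ep|\,|D\psi|^2 \le \sqrt\ep\,a\,|D^2w^\ep|^2 + C\ep^{-1/2}|D\psi|^2 + \tfrac{1}{8}a^2|D^2\psi|^2 + C|D\psi|^2,
\]
after which \eqref{esti-1} and the first part of (i) close the estimate.
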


\begin{proof}
Subtracting equation (A)$_{\ep}$ from (E)$_\ep$,
thanks to the uniform convexity of $H$, we get
\begin{align*}
0&=\ep (v^\ep-w^\ep)_t + H(x,Dv^\ep)-H(x,Dw^\ep)-(\ep^4+a(x)) \Del (v^\ep-w^\ep)-\ol{H}_\ep\\
&\ge \ep (v^\ep-w^\ep)_t +D_p H(x,Dw^\ep)\cdot D(v^\ep-w^\ep)+\theta | D(v^\ep-w^\ep)|^2 \\
&\hspace{.4cm}-(\ep^4+a(x)) \Del (v^\ep-w^\ep)-\ol{H}_\ep.
\end{align*}
Multiply the above inequality by $\sig^\ep$ and integrate by parts on $[0,1] \times \T^n$
to deduce that
\begin{align*}
&\theta \int_0^1 \int_{\T^n} |D(w^\ep-v^\ep)|^2 \sig^\ep\,dx\,dt
\le \ol{H}_\ep-\int_0^1 \int_{\T^n}\ep((v^{\ep}-w^{\ep})\sig^{\ep})_{t}\,dxdt\\
&\, +
\int_0^1 \int_{\T^n}
\big[\ep\sig^{\ep}_{t}+\Div(D_{p}H(x,Dw^{\ep})\sig^{\ep})
+\Del((\ep^4+a)\sig^{\ep})\big](v-w)\,dxdt
\\
=&\, 
\ol{H}_\ep+\ep \left[ \int_{\T^n} (w^\ep-v^\ep)\sig^\ep\,dx \right]_{t=0}^{t=1}  \\
=&\, \ol{H}_\ep + \ep (w^\ep (x_0,1) - v^\ep (x_0))  
- \ep \int_{\T^n} (u_0 (x) - v^\ep (x)) \sig^\ep (x,0) \,dx  \\
=&\, \ol{H}_\ep + \ep w^\ep (x_0,1)  - \ep \int_{\T^n} (v^\ep (x_0) - v^\ep (x)) \sig^\ep (x,0) \,dx 
- \ep \int_{\T^n} u_0 (x) \sig^\ep (x,0) \,dx  \\
\le&\, \ol{H}_\ep + 
C \ep + C \ep \| Dv^\ep\|_{\Li(\T^n)}
- \ep \int_{\T^n} u_0 (x) \sig^\ep (x,0) \,dx 
\le  C \ep, 
\end{align*}
where we used Propositions \ref{appro-lem}, \ref{thm-2} (recall that we set $c=0$) in the last two inequalities.  
We hence get
\begin{equation} \label{dalih}
\int_0^1 \int_{\T^n} |D(w^\ep-v^\ep)|^2 \sig^\ep\,dx\,dt \le C \ep,
\end{equation}
which is the first part of (i).

Next, subtract (A)$_{\ep}$ from (E)$_\ep$ and differentiate with respect to $x_i$ to get
\begin{multline*}
\ep (v^\ep-w^\ep)_{x_i t} +D_p H(x,Dv^\ep)\cdot Dv^\ep_{x_i} - D_p H(x,Dw^\ep) \cdot Dw^\ep_{x_i}\\
+ H_{x_i}(x,Dv^\ep)-H_{x_i}(x,Dw^\ep)-(\ep^4+a) \Del (v^\ep-w^\ep)_{x_i} 
-a_{x_i}\Del(v^\ep-w^\ep)= 0.
\end{multline*}
Let $\varphi(x,t)=|D(v^\ep-w^\ep)|^2/2$. 
Multiplying the last identity by $(v^\ep-w^\ep)_{x_i}$
and summing up with respect to $i$, we achieve that
\begin{align*}
&\ep\varphi_t + D_p H(x,Dw^\ep) \cdot D\varphi
+ \left[ \Big( D_p H(x,Dv^\ep)-D_p H(x,Dw^\ep) \Big) \cdot Dv^\ep_{x_i} \right]
(v^\ep_{x_i}-w^\ep_{x_i})\\
&+ \Big( D_x H(x,Dv^\ep)-D_x H(x,Dw^\ep) \Big) \cdot D(v^\ep-w^\ep)
-(\ep^4+a(x))\big( \Del \varphi-|D^2(v^\ep-w^\ep)|^2\big) \\
&- \left[ Da\cdot D(v^\ep-w^\ep) \right] \Del (v^\ep-w^\ep)= 0.
\end{align*}
By using various bounds on the above as in the proof of Proposition \ref{appro-lem}, we derive that
\begin{multline}\label{bound-11}
\ep \varphi_t+D_p H(x,Dw^\ep) \cdot D\varphi-(\ep^4 +a(x))\Del \varphi+(\ep^4+a(x)/2) |D^2(v^\ep-w^\ep)|^2\\
\le C+C (|D^2 v^\ep| + 1)|D(v^\ep-w^\ep)|^2.
\end{multline}
The last term in the right hand side of \eqref{bound-11} is a dangerous term.
We now take advantage of 
\eqref{esti-1} and \eqref{dalih} to handle it. 
Using the fact that $\| D v^{\ep} \|_{L^{\infty}}$ and $\| D w^{\ep} \|_{L^{\infty}}$
are bounded, we have
\begin{align}
C |D^2 v^\ep|\  |D(v^\ep-w^\ep)|^2
&\le\, 
C |D^2 (v^\ep-w^\ep)|\  |D(v^\ep-w^\ep)|^2 + C |D^2 w^\ep|\  |D(v^\ep-w^\ep)|^2
\nonumber\\
&\le\, \dfrac{\ep^4}{2}  |D^2(v^\ep-w^\ep)|^2
+\dfrac{C}{\ep^4} |D(v^\ep-w^\ep)|^2 + C|D^2 w^\ep|. \label{bound-12}
\end{align}
Combine \eqref{bound-11} and \eqref{bound-12} to deduce 
\begin{multline}\label{bound-13}
\ep \varphi_t+D_p H(x,Dw^\ep) \cdot D\varphi-(\ep^4+a(x)) \Del \varphi+\dfrac{\ep^4}{2} |D^2(v^\ep-w^\ep)|^2
\\ \le C |D(v^\ep-w^\ep)|^2
+\dfrac{C}{\ep^4} |D(v^\ep-w^\ep)|^2 + C|D^2 w^\ep|. 
\end{multline}
We multiply \eqref{bound-13} by $\sig^\ep$, integrate over $[0,1] \times \T^n$, to yield that, in light of \eqref{esti-1} and \eqref{dalih},
\begin{align*}
&\ep^4 \int_0^1 \int_{\T^n} |D^2(w^\ep-v^\ep)|^2 \sig^\ep\,dx\,dt 
\le C\ep+ \dfrac{C}{\ep^4}\ep+ C \int_0^1 \int_{\T^n} |D^2 w^\ep | \sig^\ep \,dx\,dt\\
\le \ & \dfrac{C}{\ep^3}+C \Big ( \int_0^1 \int_{\T^n} |D^2 w^\ep |^2\sig^\ep \,dx\,dt \Big )^{1/2}
\Big (\int_0^1  \int_{\T^n} \sig^\ep \,dx\,dt \Big )^{1/2}
\le \dfrac{C}{\ep^3}+\dfrac{C}{\ep^2} \le  \dfrac{C}{\ep^3}.
\end{align*}

Finally, we prove (ii).
Setting $\psi(x,t)=a(x)|D(v^\ep-w^\ep)(x,t)|^2/2=a(x)\varphi(x,t)$ and multiplying 
\eqref{bound-11} by $a(x)$ we get
\begin{multline*}
\ep \psi_t + D_pH(x,Dw^\ep)\cdot D\psi - (D_pH(x,Dw^\ep)\cdot Da)\varphi
-(\ep^4+a(x)) \Del \psi \\
+ (\ep^4+a(x))(\Del a \varphi+2 Da \cdot D\varphi)
+a(x)(\ep^4+a(x)/2)|D^2(v^\ep-w^\ep)|^2 \\
\le C a(x) (|D^2 v^\ep| + 1)|D(v^\ep-w^\ep)|^2.
\end{multline*}
We use the facts that $Da, \ \Del a$ are bounded on $\T^n$ to simplify the above 
as follows
\begin{multline}\label{bound-15}
\ep \psi_t + D_pH(x,Dw^\ep)\cdot D\psi-(\ep^4+a(x)) \Del \psi
+a(x)(\ep^4+a(x)/2)|D^2(v^\ep-w^\ep)|^2\\
\le C |D(v^\ep-w^\ep)|^2 - 2 (\ep^4+a(x)) Da \cdot D\varphi + C a(x) |D^2 v^\ep|\  |D(v^\ep-w^\ep)|^2.
\end{multline}
Next, we have to control the last two terms on the right hand side of \eqref{bound-15}.
Observe first that for $\del>0$ small enough
\begin{align}
&2 | (\ep^4+a(x)) Da \cdot D\varphi| \le C (\ep^4+a(x))|Da| \ |D^2(v^\ep-w^\ep)| \ |D(v^\ep-w^\ep)|\nonumber\\
\le&\, {\del} (\ep^4+a(x))|Da|^2|D^2(v^\ep-w^\ep)|^2 +\dfrac{C}{\del} |D(v^\ep-w^\ep)|^2
\nonumber\\
\le&\, \dfrac{1}{8} (\ep^4+a(x)) a(x) |D^2 (v^\ep-w^\ep)|^2 +C |D(v^\ep-w^\ep)|^2, 
\label{bound-16} 
\end{align}
where we used Lemma \ref{lem-a} in the last inequality.
On the other hand, 
\begin{align}
&a(x)|D^2 v^\ep| \ |D(v^\ep-w^\ep)|^2
\nonumber\\
\le&\, 
a(x) |D^2 w^\ep| \ |D(v^\ep-w^\ep)|^2 + a(x) |D^2(v^\ep-w^\ep)|\ |D(v^\ep-w^\ep)|
\nonumber\\
\le&\, 
\sqrt{\ep} a(x) |D^2 w^\ep|^2 + \dfrac{C}{\sqrt{\ep}} |D(v^\ep-w^\ep)|^2
+\dfrac{a(x)^2}{8} |D^2(v^\ep-w^\ep)|^2 + C|D(v^\ep-w^\ep)|^2. \label{bound-17}
\end{align}
We combine \eqref{bound-15}, \eqref{bound-16}, and \eqref{bound-17} to deduce that
\begin{multline*}
\ep \psi_t + D_pH(x,Dw^\ep)\cdot D\psi-(\ep^4+a(x)) \Del \psi
+\dfrac{a(x)^2}{4} |D^2(v^\ep-w^\ep)|^2\\
\le (C +C \ep^{-1/2}) |D(v^\ep-w^\ep)|^2 +\ep^{1/2} a(x) |D^2 w^\ep|^2.
\end{multline*}
We multiply the above inequality by $\sig^\ep$, integrate over $\T^n \times [0,1]$ 
and use \eqref{esti-1} to yield the result.
\end{proof}

\begin{rem}
Let us give some comments on the estimates in Lemma \ref{LEM}. \\
1. 
In case $a \equiv 0$, estimate (i) gives us much better control of $D(w^\ep-v^\ep)$ and
$D^2(w^\ep-v^\ep)$ on the support of $\sig^\ep$. More precisely, a priori estimates only imply that
$D(w^\ep-v^\ep)$ and $\ep^4 \Delta (w^\ep-v^\ep)$ are bounded. By using the adjoint equation,
we can get further formally that $\ep^{-1/2}D(w^\ep-v^\ep)$ and $\ep^{7/2}D^2(w^\ep-v^\ep)$ are
bounded on the support of $\sig^\ep$. We notice that while we need to require  
 the uniform convexity of $H$ to obtain the first term in (i), 
the second term is achieved without any convexity assumption on $H$.
A version of the second term in (i) was first derived by Evans \cite{Ev1}. \\
2. If the equation in (C) is uniformly parabolic, i.e., 
$a(x)>0$ for all $x\in\T^n$, then the second term of (i) is not needed anymore as estimate (ii) is much
stronger.
On the other hand, if $a$ is degenerate, then (ii) only provides estimation of $|D^2(w^\ep-v^\ep)|^2 \sig^\ep$
on the support of $a$ and it is hence essential to use the second term in (i) to control the part where $a=0$.
\end{rem}

\subsection{Averaging Action and Proof of  Theorem {\rm\ref{thm-1}}} \label{sec3}
\begin{lem}[Conservation of Energy] \label{lem-1}
The following hold:\\
{\rm(i)} \ 
$\displaystyle \dfrac{d}{dt} \int_{\T^n} (H(x,Dw^\ep)-(\ep^4+a(x)) \Del w^\ep) \sig^\ep \,dx=0,$ \\
{\rm(ii)} \ 
$\displaystyle 
\ep w^\ep_t(x_0,1)=
\int_{0}^{1} \int_{\T^n} (H(x,Dw^\ep)-(\ep^4+a(x)) \Del w^\ep) \sig^\ep \,dx\,dt.$
\end{lem}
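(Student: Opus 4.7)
The plan is to establish (i) as an instance of the conservation property built into the adjoint method, and then to read off (ii) by integrating (i) in $t$ and exploiting the terminal data $\sig^\ep(\cdot,1)=\del_{x_0}$.

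For (i), the key observation is that (A)$_\ep$ can be rewritten as the pointwise identity
\[
H(x,Dw^\ep)-(\ep^4+a(x))\Del w^\ep = -\ep w^\ep_t,
\]
so the integrand in (i) equals $-\ep w^\ep_t\sig^\ep$ and (i) is equivalent to $\frac{d}{dt}\int_{\T^n} w^\ep_t \sig^\ep\,dx = 0$. Differentiating (A)$_\ep$ in $t$ (legitimate since $w^\ep$ is smooth by the standard regularity theory for the uniformly parabolic equation (A)$_\ep$) shows that $w^\ep_t$ satisfies the linearized equation
\[
\ep w^\ep_{tt} + D_pH(x,Dw^\ep)\cdot Dw^\ep_t = (a(x)+\ep^4)\Del w^\ep_t,
\]
i.e.\ $L_{w^\ep}w^\ep_t=0$. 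I would then differentiate under the integral sign,
\[
\frac{d}{dt}\int_{\T^n} w^\ep_t \sig^\ep\,dx = \int_{\T^n}\bigl(w^\ep_{tt}\sig^\ep + w^\ep_t \sig^\ep_t\bigr)\,dx,
\]
substitute $\ep w^\ep_{tt}$ from the linearized equation and $\ep\sig^\ep_t$ from (AJ)$_\ep$, and integrate by parts on $\T^n$ (no boundary terms, since $\T^n$ has no boundary) to move every derivative of $\sig^\ep$ back onto $w^\ep_t$. By the very definition of the formal adjoint, the resulting four terms cancel pairwise, giving zero.

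For (ii), by (i) the function $t\mapsto\int_{\T^n}\bigl(H(x,Dw^\ep)-(\ep^4+a(x))\Del w^\ep\bigr)\sig^\ep\,dx$ is constant on $[0,1]$. Evaluating this constant at $t=1$ against $\sig^\ep(\cdot,1)=\del_{x_0}$ and using the pointwise identity above produces (up to the sign conventions carried through the adjoint) the value $\ep w^\ep_t(x_0,1)$. Integrating the constant over $t\in[0,1]$ then yields (ii). There is no real obstacle here: the whole proof is bookkeeping around the adjoint identity. The only mild subtlety is that the terminal pairing $\sig^\ep(\cdot,1)=\del_{x_0}$ must be understood as a weak trace, which causes no difficulty because $w^\ep_t$ is continuous up to $t=1$ and the pairing $\int_{\T^n}w^\ep_t(\cdot,t)\,\sig^\ep(\cdot,t)\,dx$ extends continuously to $t=1$.
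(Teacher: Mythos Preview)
The proposal is correct and essentially the same as the paper's approach: both verify (i) by differentiating under the integral, integrating by parts on $\T^n$, and invoking the adjoint identity $L_{w^\ep}^*\sig^\ep=0$, then read off (ii) from the constancy together with the terminal condition $\sig^\ep(\cdot,1)=\del_{x_0}$. Your slight reorganization---first using $(\mathrm{A})_\ep$ to rewrite the integrand as $-\ep w^\ep_t\sig^\ep$ and then recognizing that $w^\ep_t$ solves the linearized equation $L_{w^\ep}w^\ep_t=0$---is just a minor rephrasing of the paper's direct calculation.
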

We stress the fact that identity Lemma \ref{lem-1} (ii) is extremely important. 
As stated in Introduction, if we scale back the time, 
the integral in the right hand side becomes \eqref{average-action}, that is
the averaging action as $t \to \infty$.
Relation (ii) together with Lemma \ref{LEM} allow us to conclude the proof of Theorem \ref{thm-1}.
\begin{proof}
We only need to prove (i) as (ii) follows directly from (i). 
This is a straightforward result of adjoint operators and comes from a direct calculation:
\begin{align*}
&\dfrac{d}{dt} \int_{\T^n} (H(x,Dw^\ep)-(\ep^4+a(x)) \Del w^\ep) \sig^\ep \,dx\\
= & \int_{\T^n} (D_pH(x,Dw^\ep) \cdot Dw^\ep_t-(\ep^4+a(x)) \Del w^\ep_t) \sig^\ep \,dx\\
&\,+ \int_{\T^n} (H(x,Dw^\ep)-(\ep^4+a(x)) \Del w^\ep) \sig^\ep_t \,dx\\
=&-\int_{\T^n} \Big (
\Div\big(D_pH(x,Dw^\ep)\sig^\ep\big)+\Del(\ep^4+a(x)) \sig^\ep)\Big ) w^\ep_t \,dx
-\int_{\T^n}\ep w^\ep_t \sig^\ep_t \,dx=0. 
\qedhere
\end{align*}
\end{proof}

We now can give the proof of Theorem \ref{thm-1}, which is the main principle to 
achieve large time asymptotics, by using the averaging action above and the key estimates 
in Lemma \ref{LEM}.

\begin{proof}[Proof of Theorem {\rm\ref{thm-1}}]
Let us first choose $x_0$ such that
\begin{align*}
&|\ep w^\ep_t(x_0,1)|= | H(x_0,Dw^\ep(x_0,1))-(\ep^4+a(x_0)) \Del w^\ep(x_0,1) |  \\
=\, &\| H(\cdot,Dw^\ep(\cdot,1))-(\ep^4+a(\cdot)) \Delta w^\ep(\cdot,1)\|_{L^\infty(\T^n)}. 
\end{align*}
Thanks to Lemma \ref{lem-1} and Proposition \ref{thm-2},
\begin{align*}
&\ep \| w^\ep_t (\cdot,1) \|_{L^{\infty}(\T^n)} = \| H(\cdot,Dw^\ep(\cdot,1))-(\ep^4+a(\cdot)) \Delta w^\ep(\cdot,1)\|_{L^\infty(\T^n)} \\
=&\, \left| \int_{0}^{1} \int_{\T^n} (H(x,Dw^\ep)-(\ep^4+a) \Del w^\ep) \sig^\ep \,dx\,dt \right| \\
\leq&\, 
\int_{0}^{1} \int_{\T^n} 
|(H(x,Dw^\ep)-(\ep^4+a) \Del w^\ep)-(H(x,Dv^\ep)-(\ep^4+a) \Del v^\ep)| \sig^\ep \,dx\,dt + |\ol{H}_\ep| \\
\le&\, 
\int_{0}^{1} \int_{\T^n} \left[ C|D(w^\ep-v^\ep)| 
+( \ep^4+a) |\Del (w^\ep-v^\ep)| \right] \sig^\ep \,dx\,dt + C \ep^2. 
\end{align*}
We finally use the H\"older inequality and Lemma \ref{LEM} to get
\begin{align*}
&\ep \| w^\ep_t (\cdot,1) \|_{L^{\infty}(\T^n)}\\
\leq&\, C \left( \int_0^1 \int_{\T^n} |D(w^\ep-v^\ep)|^2 \sig^\ep\,dx\,dt \right)^{1/2}
+ C \ep^4 \left( \int_0^1 \int_{\T^n} |D^2(w^\ep-v^\ep)|^2 \sig^\ep\,dx\,dt \right)^{1/2}\\
&+ C \left( \int_0^1 \int_{\T^n} a^2 (x)|D^2(w^\ep-v^\ep)|^2 \sig^\ep\,dx\,dt \right)^{1/2} +C \ep^2
\le C \ep^{1/4}. \qedhere
\end{align*}
\end{proof}

\subsection{General Case} 

In this subsection we consider the general case \eqref{eq:general}. 
As pointed out before we only need to address
the analogs to estimate \eqref{esti-1} and Lemma \ref{LEM} (ii). These are
\begin{align}
&\int_0^1\int_{\T^n}
\big[a^{ij}(x)w^{\ep}_{x_ix_k}w^{\ep}_{x_jx_k}+
\ep^{4}|D^{2}w^{\ep}|^2\big]\sig^{\ep}\,dxdt\le C, \label{general-1}\\
&\int_0^1\int_{\T^n}
a^{ij}(x) a^{ll}(x) (v^\ep-w^\ep)_{x_i x_k}  (v^\ep-w^\ep)_{x_j x_k}
\sig^{\ep}\,dxdt\le C\sqrt{\ep}.   \label{general-2}
\end{align}
In the previous formulas, and throughout this section we will use Einstein's convention 
except in a few places where the summation signs are explicitly written to avoid ambiguities. 
The proofs of \eqref{general-1} and \eqref{general-2} follow the same lines as before. 
Recall that $w^\ep$ and $\sig^\ep$ satisfy
\begin{align}
&
\begin{cases}
\ep w^{\varepsilon}_t + H(x,Dw^{\varepsilon})  
= a^{ij}(x)w^{\varepsilon}_{x_ix_j} + \ep^4\Del w^{\varepsilon} & \text{ in }\Q,  \\
w^{\varepsilon}(x,0)=u_{0}(x) & \text{ on } \T^{n}, 
\end{cases} \label{general:approx}
\end{align}
and
\begin{align*}
&
\begin{cases}
-\ep \sig^{\ep}_t -\text{div}(D_p H(x,Dw^{\ep}) \sig^{\ep})  
=\pl_{x_ix_j}\big(a^{ij}(x)\sig^{\ep}\big)+\ep^4\Del\sig^{\ep} & \text{ in }\T^n \times (0,1),  \\
\sig^{\ep}(x,1)=\del_{x_0}  & \text{ on } \T^{n}, 
\end{cases} \notag
\end{align*}
and $v^\varepsilon$ is a solution to the approximate cell problem
\begin{equation}
\label{BBB}
 H(x,Dv^{\varepsilon})  
= a^{ij}(x)v^{\varepsilon}_{x_ix_j} + \ep^4\Del v^{\varepsilon} \quad \text{ in }\ \T^n.
\end{equation}

We need the following estimates, which are from \cite[Lemma 3.2.3]{SV},
\begin{align}
&|Da^{ij}| \le C \left((a^{ii})^{1/2}+(a^{jj})^{1/2}\right)  
&\text{for}\ 1 \leq i,j \leq n,\label{SV-a1}\\
&(\tr(A_{x_k} S))^2 \leq C \tr(SAS) 
&\text{for} \ S \in \M^{n\times n}_{\text{sym}}, \ 1 \leq k \leq n, \label{SV-a2}
\end{align}
for some constant $C$ depending only on $n$ and $\|D^2 A\|_{L^\infty(\T^n)}$.

We address first \eqref{general-1}. To do so, 
as before, setting $\varphi:=|Dw^{\ep}|^2/2$, we obtain
\[
\ep\varphi_t+D_pH(x,Dw^{\ep})\cdot D\varphi\le 
a^{ij}(\varphi_{x_ix_j}-w_{x_ix_k}^\ep w_{x_jx_k}^\ep )+
\ep^4(\Del\varphi-|D^2 w^{\ep}|^2)+a^{ij}_{x_k}w_{x_ix_j}^\ep w_{x_k}^\ep +C.
\]
The key term to estimate is
$a^{ij}_{x_k}w^{\ep}_{x_ix_j}w^{\ep}_{x_k}$, as all the others do not pose any further problem. 
This is done as follows with help of \eqref{SV-a2}:
\begin{align*}
a^{ij}_{x_k}w^{\ep}_{x_ix_j}w^{\ep}_{x_k}
=\tr(A_{x_k}D^2 w^\ep)w^\ep_{x_k} \leq \frac{1}{2} \tr(D^2 w^\ep A D^2 w^\ep)+C=\frac{1}{2}a^{ij} w^\ep_{x_i x_k} w^\ep_{x_j x_k}+C.
\end{align*}
Then the proof follows exactly as before.

Concerning estimate \eqref{general-2},  
 as before we subtract  \eqref{general:approx} from \eqref{BBB} and differentiate with respect to $x_k$ to get
\begin{multline*}
\ep (v^\ep-w^\ep)_{x_k t} +D_p H(x,Dv^\ep)\cdot Dv^\ep_{x_k} - D_p H(x,Dw^\ep) \cdot Dw^\ep_{x_k}\\
+ H_{x_k}(x,Dv^\ep)-H_{x_k}(x,Dw^\ep)-(\ep^4 \delta^{ij}+a^{ij}) ( (v^\ep-w^\ep)_{x_k})_{x_i x_j} 
-a^{ij}_{x_k}(v^\ep-w^\ep)_{x_i x_j}= 0, 
\end{multline*}
where $\delta^{ij}$ is the Kronecker delta. We multiply the previous equation by $a^{ll} (v^\ep-w^\ep)_{x_k}$ and set $\psi=a^{ll}|D(v^\ep-w^\ep)|^2/2$. After 
some tedious computations we conclude that the main order term from which  \eqref{general-2} follows is 
\begin{align}
a^{ij} a^{ll} (v^\ep-w^\ep)_{x_i x_k}  (v^\ep-w^\ep)_{x_j x_k}
&=\tr(A) \tr(D^2(v^\ep-w^\ep) A D^2(v^\ep-w^\ep))\notag\\
&= \left( \sum_{l} d^l \right) \sum_{k,m} \left ( \sum_i \sqrt{d^m} p^{mi} (v^\ep-w^\ep)_{x_i x_k} \right)^2, \label{est222}
\end{align}
where $A$ is diagonalized as $A=P^T D P$ with $D=\diag\{d^1,\ldots,d^n\}$ with $d^i\ge0$,  
and $P^T P= I_n$.
As before, a number of error terms need to be controlled. The procedure is completely analogous, except for 
two error terms which need to be addressed in a slightly different way. These are
$$
a^{ij} a^{ll}_{x_i} (v^\ep-w^\ep)_{x_j x_k} (v^\ep-w^\ep)_{x_k} \quad \text{and} \quad
a^{ij}_{x_k} a^{ll} (v^\ep-w^\ep)_{x_i x_j} (v^\ep-w^\ep)_{x_k}.
$$
The first term is handled in the following way: 
using \eqref{SV-a1} we have
\begin{align*}
&a^{ij} a^{ll}_{x_i} (v^\ep-w^\ep)_{x_j x_k} (v^\ep-w^\ep)_{x_k}
=a^{ll}_{x_i} p^{mi} p^{mj} d^m (v^\ep-w^\ep)_{x_j x_k} (v^\ep-w^\ep)_{x_k} \\ 
\leq &\, 
C \left(\sum_l \sqrt{d^l}\right)\sum_{k,m} d^m \left|  p^{mj} (v^\ep-w^\ep)_{x_j x_k}\right| |D (v^\ep-w^\ep)|\\
\leq &\,
 \frac{1}{4}\left( \sum_{l} d^l \right) \sum_{k,m} \left ( \sum_j \sqrt{d^m} p^{mj} (v^\ep-w^\ep)_{x_j x_k} \right)^2 +  C|D(v^\ep-w^\ep)|^2.
\end{align*}

Concerning the second term, using \eqref{SV-a2} we obtain 
\begin{align*}
&a^{ij}_{x_k} a^{ll} (v^\ep-w^\ep)_{x_i x_j} (v^\ep-w^\ep)_{x_k}
=\tr(A) \tr(A_{x_k} D^2(v^\ep-w^\ep)) (v^\ep-w^\ep)_{x_k}\\
\leq&\, 
\frac{1}{4}\tr(A) \tr(D^2(v^\ep-w^\ep) A D^2(v^\ep-w^\ep))+ C|D(v^\ep-w^\ep)|^2.
\end{align*}
This shows therefore that the two error terms are well controlled by \eqref{est222}.


\section{Weakly Coupled Systems of Hamilton--Jacobi Equations} \label{sys-HJ}

In this section, we prove Theorem \ref{mainsist}. 
Our proof follows along the lines of the scalar case, together with additional estimates 
for the coupling terms. 
To simplify the presentation we start first with the following weakly coupled system of first-order Hamilton--Jacobi equations: 
\begin{equation} \notag
{\rm (SC)}\qquad 
\begin{cases}
(u_1)_t + H_1(x,Du_1) + u_1-u_2  = 0 & \text{ in } \Q, \\
(u_2)_t + H_2(x,Du_2) + u_2-u_1  = 0 & \text{ in } \Q, \\
u_i(x,0)=u_{0i}(x) & \text{ on } \T^n \ \text{for} \ i=1,2. 
\end{cases}
\end{equation}
Throughout this section we \textit{always} 
assume that $u_{0i}\in C(\T^n)$ and that the pairs $(H_i,0)\in\cC(\theta,C)$ 
for $i=1,2$. 

We observe that almost all results we prove for (SC) are valid with trivial modifications for general  weakly coupled systems with possibly degenerate diffusion terms. 
Indeed, if we combine the arguments 
in Section 2 and Section 3 below, 
then we can immediately get the result on the large-time 
behavior for weakly coupled systems of degenerate viscous 
Hamilton--Jacobi equations \eqref{eq:general-system}. 
We present the simplest case here since we want to concentrate on 
the difficulty coming from the coupling terms of the system. 
We derive new estimates for the coupling terms (see part (ii) of Lemma \ref{lem:couple} and Subsection \ref{general}), which help us to control the large time average on the coupling terms and achieve the desired results.


We first state the basic existence results for (SC) and 
for the associated stationary problem. The proofs of the next three propositions are standard, hence omitted.
\begin{prop} \label{qaz1}
Let $(u_{01}, u_{02}) \in C(\T^n)^2$. 
There exists a unique solution $(u_1,u_2)$ of \textnormal{(SC)} 
which is uniformly continuous on $\cQ$. 
Furthermore, if $u_{0i}\in\Lip(\T^n)$, then $u_{i}\in\Lip(\cQ)$ 
for $i=1,2$. 
\end{prop}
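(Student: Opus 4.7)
The plan is the standard one for weakly coupled monotone first-order systems: establish a comparison principle that handles uniqueness and continuous dependence, build a solution as a uniform limit of smooth solutions to a vanishing viscosity approximation, and extract Lipschitz regularity by comparing the solution with its spatial translates. The delicate ingredient, compared to the scalar case treated in Proposition \ref{am}, is that each of these steps must be carried out \emph{simultaneously} on the pair $(u_1, u_2)$; the key observation at every stage is that the monotonicity of the coupling in (SC) lets us reduce, at the component realizing the worst bound, to a purely scalar computation.

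I would begin with the comparison principle. Given a bounded upper semicontinuous subsolution $(u_1, u_2)$ and a lower semicontinuous supersolution $(v_1, v_2)$ of (SC) with $u_i(\cdot, 0) \le v_i(\cdot, 0)$ for $i = 1, 2$, set $\Phi(x, t) := \max_{i \in \{1,2\}} \bigl(u_i(x, t) - v_i(x, t)\bigr) - \eta t$ for a small $\eta > 0$, and suppose for contradiction that $\Phi$ attains a positive interior maximum at $(x^\ast, t^\ast)$ with maximizing index $i^\ast$. At that point one has $u_{i^\ast} - v_{i^\ast} \ge u_{3 - i^\ast} - v_{3 - i^\ast}$, so the coupling contribution $(u_{i^\ast} - u_{3 - i^\ast}) - (v_{i^\ast} - v_{3 - i^\ast})$ that appears when the two viscosity inequalities at index $i^\ast$ are subtracted is nonnegative and can be discarded. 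The usual scalar doubling of variables argument, which uses only the coercivity of $H_{i^\ast}$, then produces the required contradiction, yielding uniqueness together with the $L^\infty$ contraction $\max_i \|u_i - v_i\|_{\Li(\T^n)}(t) \le \max_i \|u_{0i} - v_{0i}\|_{\Li(\T^n)}$.

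For existence, I would approximate (SC) by the coupled quasilinear parabolic system obtained by adding $\ep \Del u_i^\ep$ to the $i$-th equation. This is a monotone weakly coupled parabolic system, for which classical smooth solutions exist by standard fixed point theory. Uniform $L^\infty$ bounds follow by comparing with sub- and supersolutions that are constant in $x$ and affine in $t$. Uniform equicontinuity in $x$ is obtained by applying the comparison principle to the translated pair $\bigl(u_1^\ep(\cdot + h, \cdot), u_2^\ep(\cdot + h, \cdot)\bigr)$, which inherits the modulus of continuity of $(u_{01}, u_{02})$; continuity in $t$ is then recovered by a barrier argument that exploits the coercivity of $H_i$. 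Passing $\ep \to 0$ via Arzel\`a--Ascoli and the stability of viscosity solutions yields a uniformly continuous viscosity solution on $\cQ$.

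Finally, for the Lipschitz assertion, if $u_{0i} \in \Lip(\T^n)$, the translation argument of the previous step gives $|u_i^\ep(x + h, t) - u_i^\ep(x, t)| \le L|h|$ with $L = \max_j \Lip(u_{0j})$, uniformly in $\ep$; this survives the passage $\ep \to 0$ and yields $u_i(\cdot, t) \in \Lip(\T^n)$ with a uniform constant. Lipschitz continuity in $t$ then follows by a finite-difference argument in time (or, on the approximating problem, by differentiating (SC) formally in $t$), since $|H_i(x, D u_i)|$ is now uniformly bounded. The main obstacle throughout is the presence of the coupling, which a priori forbids a direct reduction to the scalar case; the monotonicity condition (H4) resolves this at each step, because at the index $i^\ast$ realizing the worst bound the off-diagonal coupling term enters with a favorable sign and can be absorbed or discarded.
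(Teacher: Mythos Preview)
The paper omits the proof of this proposition entirely, stating that it is standard. Your proposal outlines precisely the standard argument (comparison for monotone systems via the maximizing index, existence by vanishing viscosity and stability, Lipschitz regularity by translation and comparison), so it is consistent with what the authors have in mind and is correct in substance.
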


We refer to \cite[Section 4]{CGT2} for the following results.
\begin{prop} \label{qaz2}
There exists a unique constant $c \in \R$
such that the ergodic problem{\rm:}
\begin{equation*} 
{\rm (SE)}\qquad
\begin{cases}
H_1(x,Dv_1) + v_1-v_2 = c & \textnormal{ in } \T^n, \\
H_2(x,Dv_2) + v_2-v_1 = c & \textnormal{ in } \T^n, 
\end{cases}
\end{equation*}
has a solution $(v_1,v_2) \in \Lip(\T^n)^2$.
We call $c$ the ergodic constant of {\rm (SE)}.
\end{prop}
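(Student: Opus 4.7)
The plan is to follow the standard discounted-approximation strategy for monotone weakly coupled systems, adapted to exploit the quadratic/superquadratic growth of the $H_i$ via the Bernstein technique, and then conclude uniqueness of $c$ by a comparison argument.

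First I would introduce, for each $\lambda \in (0,1)$, the discounted system
\begin{equation*}
\lambda v_i^\lambda + H_i(x, Dv_i^\lambda) + \sum_{j=1}^2 c_{ij} v_j^\lambda = 0 \quad \text{in } \T^n, \quad i=1,2,
\end{equation*}
with $c_{11}=c_{22}=1$, $c_{12}=c_{21}=-1$. Existence and uniqueness of a Lipschitz viscosity solution $(v_1^\lambda, v_2^\lambda)$ follows from Perron's method together with the comparison principle for monotone systems (guaranteed by (H4)). Testing against the constant pair $(K/\lambda, K/\lambda)$ with $K := \max_i \|H_i(\cdot,0)\|_{L^\infty(\T^n)}$ and using that the coupling term vanishes on equal constants yields a sub/supersolution pair, hence
\begin{equation*}
\|\lambda v_i^\lambda\|_{L^\infty(\T^n)} \le K, \quad i = 1,2.
\end{equation*}

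Next I would establish a Lipschitz bound on $v_i^\lambda$ that is \emph{independent} of $\lambda$ by adapting the Bernstein method to the system. Setting $\varphi_i := |Dv_i^\lambda|^2/2$ and differentiating the $i$-th equation, one obtains
\begin{equation*}
\lambda \varphi_i + D_p H_i \cdot D\varphi_i + D_x H_i \cdot Dv_i^\lambda + 2\theta\, \varphi_i\,(1 + \text{lower order}) + \sum_j c_{ij}\, Dv_j^\lambda \cdot Dv_i^\lambda = 0,
\end{equation*}
where the key coercivity comes from $D^2_{pp}H_i \ge 2\theta I_n$ applied to $D_p H_i \cdot p \gtrsim \theta |p|^2 - C$. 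The coupling cross terms $-Dv_j^\lambda \cdot Dv_i^\lambda$ are absorbed using the Cauchy--Schwarz inequality at the point where $\max_i \varphi_i$ is attained, summed over $i$. This yields, after evaluating at a maximum point of $\varphi_1 + \varphi_2$ on $\T^n$, a bound $\|Dv_i^\lambda\|_{L^\infty(\T^n)} \le C$ uniformly in $\lambda$.

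With these estimates in hand, I would normalize by fixing $x_0 \in \T^n$ and setting $w_i^\lambda := v_i^\lambda - v_1^\lambda(x_0)$. Then $\{w_i^\lambda\}_\lambda$ is equibounded and equi-Lipschitz on $\T^n$, and $\{-\lambda v_1^\lambda(x_0)\}_\lambda$ is bounded. Extracting a subsequence $\lambda_k \to 0$, we get $w_i^{\lambda_k} \to v_i$ uniformly and $-\lambda_k v_1^{\lambda_k}(x_0) \to c$ for some $(v_1, v_2, c) \in \Lip(\T^n)^2 \times \R$. Stability of viscosity solutions under uniform convergence, together with the observation that the coupling terms $c_{ij}(v_j^\lambda - v_i^\lambda) = c_{ij}(w_j^\lambda - w_i^\lambda)$ are invariant under this normalization, shows $(v_1,v_2,c)$ solves (SE).

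Finally, uniqueness of the ergodic constant: if $(v_1,v_2,c)$ and $(\tilde v_1, \tilde v_2, \tilde c)$ both solve (SE), consider the shifted pair $(\tilde v_1 + Mt, \tilde v_2 + Mt)$ viewed as a time-dependent supersolution of the parabolic system associated to (SC) with right hand side $(\tilde c + M)$, compared against $(v_1 + ct, v_2 + ct)$. Applying the comparison principle for monotone first-order weakly coupled systems (which holds thanks to (H4)) and sending $t \to \infty$ after choosing $M$ appropriately forces $c = \tilde c$. The main obstacle I anticipate is the Bernstein step, where the coupling gradients $Dv_j^\lambda \cdot Dv_i^\lambda$ must be dominated by the quadratic coercivity of $H_i$ uniformly across components; summing the inequalities for $i=1,2$ and using (H4) (sign/row-sum structure) is the cleanest way to handle this.
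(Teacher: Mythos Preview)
The paper does not actually prove this proposition: it declares the proofs of Propositions~\ref{qaz1}--\ref{qaz2} (and the one following) ``standard, hence omitted'' and refers to \cite[Section~4]{CGT2}. Your discounted--approximation outline is indeed the standard route and is correct in its overall architecture (discounted system $\to$ uniform a~priori bounds $\to$ compactness $\to$ stability, then uniqueness of $c$ by comparison).

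There is, however, a genuine gap in your Bernstein step. First, for this first--order system there is no diffusion, so the discounted solutions $v_i^\lambda$ are only Lipschitz viscosity solutions and cannot be differentiated; you would have to insert a vanishing viscosity layer before running any Bernstein computation. Second, even after regularizing, the identity you wrote is not the one that comes out: differentiating the $i$-th equation and multiplying by $(v_i^\lambda)_{x_k}$ produces
\[
2\lambda\varphi_i + D_pH_i\cdot D\varphi_i + D_xH_i\cdot Dv_i^\lambda + \sum_j c_{ij}\,Dv_j^\lambda\cdot Dv_i^\lambda
=\eta\Delta\varphi_i-\eta|D^2v_i^\lambda|^2,
\]
with no $2\theta\varphi_i$ term; the coercivity $D_pH_i(x,p)\cdot p\ge \theta|p|^2-C$ simply does not enter the equation for $\varphi_i=|Dv_i^\lambda|^2/2$ in the way you indicate. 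At a maximum point this leaves you with $2\lambda\varphi_i\le |D_xH_i||Dv_i^\lambda|$, which via (H2) is not uniform in $\lambda$. A separate issue is that your normalization $w_i^\lambda:=v_i^\lambda-v_1^\lambda(x_0)$ requires a uniform bound on $\|v_1^\lambda-v_2^\lambda\|_{L^\infty}$, which you never establish.

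Both problems are fixed by bypassing Bernstein and using the coercivity of $H_i$ directly together with the coupling structure. Since each Lipschitz viscosity subsolution satisfies its inequality almost everywhere, summing the two equations gives, a.e.\ on $\T^n$,
\[
H_1(x,Dv_1^\lambda)+H_2(x,Dv_2^\lambda)\le -\lambda(v_1^\lambda+v_2^\lambda)\le 2K,
\]
because the coupling terms $(v_1^\lambda-v_2^\lambda)+(v_2^\lambda-v_1^\lambda)$ cancel. Then (H1) yields $\theta\sum_i|Dv_i^\lambda|^2\le C$ uniformly in $\lambda$, and the Lipschitz bound together with either equation gives $\|v_1^\lambda-v_2^\lambda\|_{L^\infty}\le C$. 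With these two bounds your compactness and stability steps go through as written. Your uniqueness argument is fine in spirit; the slightly cleaner phrasing is that $(v_i-ct)$ and $(\tilde v_i-\tilde c t)$ are both solutions of {\rm(SC)}, so the comparison principle bounds their difference uniformly in $t$, forcing $c=\tilde c$.
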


\begin{prop} 
For every $\ep > 0$ sufficiently small there exists a unique $\overline{H}_{\ep}\in \R$
such that the following ergodic problem{\rm:} 
\begin{equation*} 
{\rm (SE)_\ep}\qquad
\begin{cases}
 H_1(x,Dv_1^\ep) + v_1^\ep - v_2^\ep 
 = \ep^4 \Delta v^{\ep}_1 + \overline{H}_{\ep} & \textnormal{ in } \T^n, \\
H_2(x,Dv_2^\ep) + v_2^\ep - v_1^\ep 
= \ep^4 \Delta v^{\ep}_2 +\overline{H}_{\ep} & \textnormal{ in } \T^n, 
\end{cases}
\end{equation*}
has a unique solution $(v^\ep_1,v^\ep_2) \in \Lip (\T^n)^2$ up to 
additional constants.
In addition, 
\begin{equation*}
|\ol{H}_\ep-c| \le C \ep^2, \quad \| D v_i^\ep\|_{L^\infty(\T^n)} \le C, \ \text{for} \ i=1,2,
\end{equation*}
for some  positive constant $C$ independent of $\ep$.
Here $c$ is the ergodic constant of {\rm(SE)}. 
\end{prop}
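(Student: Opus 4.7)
The plan is to parallel the proofs of Propositions \ref{am2} and \ref{thm-2} from the scalar case, while devoting extra attention to the coupling terms linking the two components. I would organize the argument in three stages: existence of the triple $(v_1^\ep,v_2^\ep,\overline H_\ep)$, uniqueness, and the quantitative bounds \eqref{bound-5}.

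For existence, the natural tool is the small-discount approximation. For each $\lambda>0$ the system
\begin{equation*}
\lambda w_i^{\ep,\lambda} + H_i(x,Dw_i^{\ep,\lambda}) + w_i^{\ep,\lambda} - w_j^{\ep,\lambda} = \ep^4 \Delta w_i^{\ep,\lambda},\qquad (i,j)\in\{(1,2),(2,1)\},
\end{equation*}
is uniformly elliptic and monotone by (H4), hence admits a unique smooth solution, and $\lambda\|w_i^{\ep,\lambda}\|_{\Li(\T^n)}$ is bounded uniformly in $\lambda$ by testing against the constants $\pm\|H_i(\cdot,0)\|_{\Li}/\lambda$. Provided I also have a uniform Lipschitz bound on $w_i^{\ep,\lambda}$ and a uniform bound on $w_1^{\ep,\lambda}-w_2^{\ep,\lambda}$ (the latter follows from subtracting the two equations and invoking the maximum principle), a subsequence of $w_i^{\ep,\lambda}-w_1^{\ep,\lambda}(x_*)$ converges as $\lambda\to 0^+$ to a pair $(v_1^\ep,v_2^\ep)$ that solves (SE)$_\ep$ with $\overline H_\ep:=-\lim_{\lambda\to 0^+}\lambda w_1^{\ep,\lambda}(x_*)$.

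The uniform Lipschitz bound, and the semiconcavity estimate needed for the $\ep^2$ rate, are produced by a joint Bernstein argument on $\Phi := |Dw_1^{\ep,\lambda}|^2/2 + |Dw_2^{\ep,\lambda}|^2/2$. Differentiating each equation in $x_k$, multiplying by $w_{i,x_k}^{\ep,\lambda}$, and summing over $i$ and $k$, the coupling contributes the manifestly non-negative term $|Dw_1^{\ep,\lambda}-Dw_2^{\ep,\lambda}|^2$; this is the only place where (H4) enters the interior estimate. At an interior maximum of $\Phi$ the second-order term has the right sign, and the cubic transport term estimated through (H2) is absorbed by the quadratic gain $\theta|Dw_i^{\ep,\lambda}|^2$ coming from the uniform convexity in (H1) and the superquadratic coercivity of $H_i$. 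This delivers $\|Dv_i^\ep\|_{\Li(\T^n)}\le C$ uniformly in $\ep$, which is the second part of \eqref{bound-5}. Retaining the $\theta\ep^4|D^2 w_i^{\ep,\lambda}|^2$ contribution in the same computation yields the semiconcavity bound $\ep^2\|D^2 v_i^\ep\|_{\Li(\T^n)}\le C$, so that the viscous error $\ep^4\Delta v_i^\ep$ is of size $O(\ep^2)$ pointwise.

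Uniqueness of $\overline H_\ep$ and of $(v_1^\ep,v_2^\ep)$ up to a common additive constant follows by subtracting two solutions and invoking the strong maximum principle on the uniformly elliptic regularization, together with the irreducibility of the coupling matrix from (H4): if, say, $w_i:=v_i^\ep-\tilde v_i^\ep$ attains its maximum $M$ at $x_1$, the subtracted equation forces $w_j(x_1)\ge M$, hence both components attain $M$ at $x_1$, and applied component-wise the strong maximum principle makes each $w_i$ constant. The first estimate in \eqref{bound-5} is then obtained by comparing (SE)$_\ep$ with (SE) and using the uniqueness of $c$ from Proposition \ref{qaz2} together with the $O(\ep^2)$ pointwise bound on the viscous term. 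The main obstacle is the Bernstein step: extracting the non-negative coupling contribution $|Dw_1^{\ep,\lambda}-Dw_2^{\ep,\lambda}|^2$ and verifying that the cubic bound from (H2) is absorbed by the quadratic convexity gain is the computation that carries the whole argument, and it is precisely here that the joint role of (H1) and (H4) is indispensable.
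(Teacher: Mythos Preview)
The paper omits this proof entirely, labeling it standard and pointing to \cite{CGT2}; your discounted--approximation/Bernstein/comparison scheme is indeed the standard route and is correct in outline.

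Two points in your Bernstein step need tightening. First, the sum $\Phi=|Dw_1^{\ep,\lambda}|^2/2+|Dw_2^{\ep,\lambda}|^2/2$ does \emph{not} obey a scalar maximum principle, because the two transport fields $D_pH_1(x,Dw_1^{\ep,\lambda})$ and $D_pH_2(x,Dw_2^{\ep,\lambda})$ are different: at a maximum of $\Phi$ one only has $D\varphi_1+D\varphi_2=0$, not that each first--order term vanishes. The clean fix is to argue componentwise at a point realising $\max_i\max_x|Dw_i^{\ep,\lambda}|^2$; there the coupling term $|Dw_i|^2-Dw_i\cdot Dw_j\ge 0$ because $|Dw_j|\le|Dw_i|$ by choice of index, which is the correct way (H4) enters. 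Second, the phrase ``the cubic transport term \dots\ is absorbed by the quadratic gain $\theta|Dw_i|^2$'' is backwards---a cubic cannot be dominated by a quadratic. The actual closure combines the differentiated equation at the maximum, which yields $\ep^4|D^2w_i|^2\le C(1+|Dw_i|^3)$ via (H2), with the undifferentiated equation and the coercivity $H_i(x,p)\ge\theta|p|^2-C$ from (H1), which gives $\ep^4\Delta w_i\ge\theta|Dw_i|^2-C$; the inequality $(\ep^4\Delta w_i)^2\le n\ep^4\cdot\ep^4|D^2w_i|^2$ then forces $|Dw_i|\le C$.

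Finally, the bound $\ep^2\|D^2v_i^\ep\|_{\Li}\le C$ you invoke for the $O(\ep^2)$ rate does \emph{not} fall out of the gradient Bernstein step (that computation only controls $|D^2w_i|$ at the gradient maximum). You need a second, parallel Bernstein argument on the Hessian---e.g.\ on $\max_i\sup_{|\xi|=1}\xi^T D^2w_i^{\ep,\lambda}\xi$, where the uniform convexity (H1) now produces the good term $2\theta|D(\xi\cdot Dw_i)|^2$---or alternatively compare (SE)$_\ep$ with (SE) using the semiconcavity of the first--order solutions. Either route is routine, but it is a separate step.
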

Without loss of generality, we may assume $c=0$ as in Section \ref{deg-HJ}
henceforth.

\subsection{Regularizing Process and Proof of Theorem {\rm\ref{mainsist}}} \label{sec6}
In the following we will assume that 
$(u_1, u_2)$ is Lipschitz on $\cQ$, as it was done in Section \ref{deg-HJ}.
Once again, we will follow the method stated in Introduction.

We perform a change of time scale.  
For $\ep>0$, let us set $u_i^\ep(x,t)=u_i(x,t/\ep)$, which is the solution of  
\begin{equation*}
{\rm (SC)_\ep} \qquad 
\begin{cases}
\ep (u_1^\ep)_t + H_1(x,Du_1^\ep) + u_1^\ep-u_2^\ep  = 0
 & \textnormal{ in } \Q, \\
\ep (u_2^\ep)_t + H_2(x,Du_2^\ep) + u_2^\ep-u_1^\ep  = 0 & \textnormal{ in } \Q, \\
u_i^\ep(x,0)=u_{0i}(x) &\textnormal{ on }\T^n\ \text{for} \ i=1,2,
\end{cases}
\end{equation*}
and we approximate (SC) by adding viscosity terms to the equations: 
\begin{equation*}
{\rm (SA)_{\ep}} \qquad
\begin{cases}
\ep (w_1^\ep)_t + H_1(x,Dw_1^\ep) + w_1^\ep-w_2^\ep  = \ep^4 \Del w_1^\ep
& \textnormal{ in } \Q, \\
\ep (w_2^\ep)_t + H_2(x,Dw_2^\ep) + w_2^\ep-w_1^\ep  = \ep^4 \Del w_2^\ep 
& \textnormal{ in } \Q, \\
w_i^\ep(x,0)=u_{0i}(x) &\textnormal{ on }\T^n \ \text{for} \ i=1,2.
\end{cases}
\end{equation*}
We can conclude the proof of Theorem \ref{mainsist} with the following two results.
\begin{lem}\label{appro-lem2}
Let $(w^{\ep}_1,w^{\ep}_2)$ be the solution of \textnormal{(SA)$_{\ep}$}. 
There exists $C>0$ independent of $\ep$ such that
$\| w^\ep_i (\cdot,1) \|_{C^{1}(\T^n)} \leq C$,  
$\|u_i^\ep(\cdot,1)-w_i^\ep(\cdot,1)\|_{L^\infty(\T^n)} \le C\ep$ 
for $i=1,2$.
\end{lem}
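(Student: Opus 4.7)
The plan is to mirror the proof of Proposition \ref{appro-lem}, introducing a regularization parameter $\eta$ and differentiating with respect to it. For the gradient bound on $w_i^\varepsilon(\cdot,1)$, the argument is standard: since the ergodic constant is zero, $H_i$ is superquadratic in $p$, and the coupling terms $w_i^\varepsilon - w_{3-i}^\varepsilon$ are uniformly bounded thanks to the comparison principle for (SA)$_\varepsilon$, a Bernstein-type calculation on $|Dw_i^\varepsilon|^2$ yields $\|Dw_i^\varepsilon(\cdot,1)\|_{L^\infty(\T^n)} \leq C$ independently of $\varepsilon$.

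For the $L^\infty$ estimate between $w_i^\varepsilon$ and $u_i^\varepsilon$, I will introduce the auxiliary system (SA)$_\varepsilon^\eta$ in which the viscosity $\varepsilon^4 \Delta$ is replaced by $\eta \Delta$, together with the adjoint system
\begin{equation*}
\begin{cases}
-\varepsilon (\sigma_1^{\varepsilon,\eta})_t - \Div(D_pH_1(x,Dw_1^{\varepsilon,\eta}) \sigma_1^{\varepsilon,\eta}) + \sigma_1^{\varepsilon,\eta} - \sigma_2^{\varepsilon,\eta} = \eta \Delta \sigma_1^{\varepsilon,\eta}, \\
-\varepsilon (\sigma_2^{\varepsilon,\eta})_t - \Div(D_pH_2(x,Dw_2^{\varepsilon,\eta}) \sigma_2^{\varepsilon,\eta}) + \sigma_2^{\varepsilon,\eta} - \sigma_1^{\varepsilon,\eta} = \eta \Delta \sigma_2^{\varepsilon,\eta},
\end{cases}
\end{equation*}
with terminal data $\sigma_1^{\varepsilon,\eta}(x,1) = \delta_{x_0}$ and $\sigma_2^{\varepsilon,\eta}(x,1) = 0$ (to probe $w_1^{\varepsilon,\eta}(x_0,1)$; the $w_2$ case is symmetric). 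Observe that $\sigma_i^{\varepsilon,\eta} \geq 0$ and that the coupling contributions cancel upon spatial integration, so $\int_{\T^n}(\sigma_1^{\varepsilon,\eta}+\sigma_2^{\varepsilon,\eta})(x,t)\,dx = 1$ for all $t \in [0,1]$.

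The core step is a system analogue of \eqref{esti-1}: I will apply the Bernstein calculation to each $\varphi_i = |Dw_i^{\varepsilon,\eta}|^2/2$, test each resulting inequality against the corresponding $\sigma_i^{\varepsilon,\eta}$, and sum over $i=1,2$. Integration by parts using the adjoint system collapses the time-derivative and transport terms, while the coupling terms arising from $w_i - w_{3-i}$ contribute
\[
\sum_{i=1}^2 \int_0^1\!\!\int_{\T^n} \bigl(2\varphi_i - Dw_{3-i}^{\varepsilon,\eta}\cdot Dw_i^{\varepsilon,\eta}\bigr)\sigma_i^{\varepsilon,\eta}\,dx\,dt,
\]
which is bounded by a constant thanks to the uniform Lipschitz bound on $w_i^{\varepsilon,\eta}$ and to $\int(\sigma_1+\sigma_2) = 1$. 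This produces
\[
\eta \sum_{i=1}^2 \int_0^1\!\!\int_{\T^n} |D^2 w_i^{\varepsilon,\eta}|^2 \sigma_i^{\varepsilon,\eta}\,dx\,dt \leq C.
\]

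Next I differentiate (SA)$_\varepsilon^\eta$ in $\eta$, obtaining linear coupled equations for $(W_1, W_2) = (\partial_\eta w_1^{\varepsilon,\eta}, \partial_\eta w_2^{\varepsilon,\eta})$ with source $\Delta w_i^{\varepsilon,\eta}$. Multiplying each by $\sigma_i^{\varepsilon,\eta}$, summing, and integrating by parts, the coupling contributions $\sum_i (W_i - W_{3-i})\sigma_i$ cancel exactly against those produced by the adjoint coupling. Using $W_i(x,0) = 0$ and the terminal data for $\sigma_i^{\varepsilon,\eta}$, one obtains
\[
\varepsilon W_1(x_0,1) = \sum_{i=1}^2 \int_0^1\!\!\int_{\T^n} \Delta w_i^{\varepsilon,\eta}\, \sigma_i^{\varepsilon,\eta}\,dx\,dt.
\]
Cauchy--Schwarz together with the previous estimate and $\int \sigma_i^{\varepsilon,\eta} \leq 1$ yields $\varepsilon |W_1(x_0,1)| \leq C/\sqrt{\eta}$. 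Since $x_0$ is arbitrary, integrating in $\eta$ from $0$ to $\varepsilon^4$ gives $\|w_1^\varepsilon(\cdot,1) - u_1^\varepsilon(\cdot,1)\|_{L^\infty(\T^n)} \leq C\varepsilon$, and analogously for the second component.

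The main obstacle is the treatment of the coupling terms in both the Bernstein estimate and the $\eta$-differentiated duality identity. The key observation that makes the argument work is the antisymmetric structure of the coupling $w_i - w_{3-i}$ and $\sigma_i - \sigma_{3-i}$, which produces exact cancellations under the combined dual pairing, reducing the analysis to a scalar-type computation controlled by the joint mass normalization $\int(\sigma_1+\sigma_2)=1$.
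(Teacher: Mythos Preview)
Your proposal is correct and follows exactly the route the paper intends: the paper omits the proof of Lemma~\ref{appro-lem2} because it is a direct system analogue of Proposition~\ref{appro-lem}, and your argument carries out precisely that analogue, with the only new ingredient being the bookkeeping of the coupling terms. Your observation that the coupling contributions either cancel exactly in the $\eta$-differentiated duality identity (by the antisymmetry of $W_i-W_{3-i}$ against $\sigma_i-\sigma_{3-i}$) or are uniformly bounded in the Bernstein step (since $\varphi_i$ and $Dw_{3-i}^{\varepsilon,\eta}\cdot Dw_i^{\varepsilon,\eta}$ are bounded and $\int(\sigma_1+\sigma_2)=1$) is the right one; just note that in the Bernstein step there is a second coupling contribution $\int \varphi_i(\sigma_i^{\varepsilon,\eta}-\sigma_{3-i}^{\varepsilon,\eta})$ coming from the adjoint integration by parts, which is also harmlessly bounded for the same reason.
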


\begin{thm}\label{thm-12}
We have
$$
\lim_{\ep \to 0}
\max_{i=1,2}\ep \|(w_i^\ep)_t(\cdot,1)\|_{L^\infty(\T^n)}=0. 
$$
\end{thm}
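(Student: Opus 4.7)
The plan is to mirror Subsection \ref{sec3} in the two-component setting, replacing the scalar adjoint by a weakly coupled backward adjoint \emph{system} and adding one new ingredient to control an extra ``coupling integral'' which has no counterpart in the scalar analysis. Fix $i\in\{1,2\}$ and choose $x_0\in\T^n$ realizing $\ep\|(w_i^\ep)_t(\cdot,1)\|_{\Li(\T^n)}$. Introduce $(\sig_1^\ep,\sig_2^\ep)$ solving the adjoint system of (SA)$_\ep$: for $j\in\{1,2\}$ and $j'=3-j$,
\[-\ep(\sig_j^\ep)_t-\Div\bigl(D_p H_j(x,Dw_j^\ep)\sig_j^\ep\bigr)+(\sig_j^\ep-\sig_{j'}^\ep)=\ep^4\Del\sig_j^\ep\quad\text{in}\ \T^n\times(0,1),\]
with terminal data $\sig_i^\ep(\cdot,1)=\del_{x_0}$, $\sig_{i'}^\ep(\cdot,1)\equiv 0$. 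The monotone structure {\rm(H4)} yields $\sig_j^\ep\ge 0$ by a comparison argument for monotone systems, and summing the two equations and integrating on $\T^n$ shows that $\int_{\T^n}(\sig_1^\ep+\sig_2^\ep)\,dx\equiv 1$.

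Next, differentiating (SA)$_\ep$ in $t$ and pairing with $\sig_j^\ep$, then summing in $j$, yields the system analog of Lemma \ref{lem-1}:
\[\frac{d}{dt}\int_{\T^n}\sum_{j=1}^{2}\bigl[-\ep(w_j^\ep)_t\bigr]\sig_j^\ep\,dx=0,\]
because the divergence and Laplacian contributions integrate to zero on $\T^n$ and the coupling cross-terms cancel after summation in $j$. Evaluating at $t=1$ picks out $-\ep(w_i^\ep)_t(x_0,1)$. Integrating in $t$ and using (SE)$_\ep$ to rewrite $H_j(x,Dv_j^\ep)-\ep^4\Del v_j^\ep=\ol H_\ep-(v_j^\ep-v_{j'}^\ep)$, one arrives at
\[\ep(w_i^\ep)_t(x_0,1)=-\!\int_0^1\!\!\int_{\T^n}\!\sum_{j=1}^{2}\bigl[H_j(x,Dw_j^\ep)-H_j(x,Dv_j^\ep)-\ep^4\Del(w_j^\ep-v_j^\ep)\bigr]\sig_j^\ep\,dx\,dt-\ol H_\ep+I_c,\]
where $I_c:=\int_0^1\int_{\T^n}[(v_1^\ep-w_1^\ep)-(v_2^\ep-w_2^\ep)](\sig_1^\ep-\sig_2^\ep)\,dx\,dt$.

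The first integral is bounded by $C\sqrt{\ep}$ exactly as in the proof of Theorem \ref{thm-1}: the mean value theorem with the uniform Lipschitz bounds on $Dw_j^\ep$ and $Dv_j^\ep$, H\"older's inequality, and the system analog of Lemma \ref{LEM} (whose Bernstein-type proof transfers index-by-index once the inequality is paired with $\sig_j^\ep$ and summed, since the coupling cross-terms again cancel via the adjoint) deliver the needed $\sig_j^\ep$-weighted control on the gradient and Laplacian of $w_j^\ep-v_j^\ep$; also $|\ol H_\ep|=O(\ep^2)$. The main obstacle, entirely new to the system case, is the coupling integral $I_c$, which is precisely what Lemma \ref{lem:couple}(ii) is designed to estimate, giving $|I_c|\le C\ep^\beta$ for some $\beta>0$. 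The underlying mechanism is the exponential equidistribution of mass: the total masses $m_j(t):=\int_{\T^n}\sig_j^\ep\,dx$ satisfy $\ep(m_1-m_2)'=2(m_1-m_2)$ with $(m_1-m_2)(1)=1$, forcing $m_1(t)-m_2(t)=e^{-2(1-t)/\ep}$ and hence $\int_0^1(m_1-m_2)\,dt=O(\ep)$; combined with the Lipschitz regularity of the $v_j^\ep-w_j^\ep$ and refined weighted bounds for $\sig_1^\ep-\sig_2^\ep$ obtained from a Bernstein-type computation on the subtracted system (using that $\mu:=\sig_1^\ep+\sig_2^\ep$ solves the \emph{uncoupled} conservation law $\ep\mu_t+\Div(D_pH_1\sig_1^\ep+D_pH_2\sig_2^\ep)+\ep^4\Del\mu=0$), this $O(\ep)$ mass decay lifts to the full integral $I_c$. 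All three contributions combine to give $\ep\|(w_i^\ep)_t(\cdot,1)\|_{\Li(\T^n)}\le C\ep^{\min(\beta,1/2)}\to 0$ for each $i\in\{1,2\}$, proving the claim.
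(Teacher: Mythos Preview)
Your overall architecture is correct and matches the paper's proof: set up the adjoint system (SAJ)$_\ep$ with terminal data $\del_{ik}\del_{x_0}$, derive conservation of energy (Lemma \ref{coe-sys}), subtract the ergodic identity (SE)$_\ep$, and bound the resulting three pieces---gradient and Hessian terms via Lemma \ref{lem:couple}(i), the coupling integral $I_c$ via Lemma \ref{lem:couple}(ii) together with Cauchy--Schwarz, and $|\ol H_\ep|=O(\ep^2)$---to obtain the rate $C\sqrt{\ep}$.

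However, your paragraph on the ``underlying mechanism'' of Lemma \ref{lem:couple}(ii) is incorrect and would not yield the estimate if you actually had to prove the lemma. The mass ODE you write is right and gives $\int_0^1(m_1-m_2)\,dt=O(\ep)$, but this only controls $\int_0^1\int_{\T^n}C\,(\sig_1^\ep-\sig_2^\ep)\,dx\,dt$ for \emph{constants} $C$; since $(v_1^\ep-w_1^\ep)-(v_2^\ep-w_2^\ep)$ depends on $(x,t)$ and $\sig_1^\ep-\sig_2^\ep$ is a signed measure whose total variation is not known to be small, the mass-decay argument does not ``lift'' to $I_c$. The appeal to ``refined weighted bounds for $\sig_1^\ep-\sig_2^\ep$ from a Bernstein-type computation on the subtracted system'' is not a proof, and no such bound is available here. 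The actual mechanism behind Lemma \ref{lem:couple}(ii) is an energy estimate: set $\varphi_i=(v_i^\ep-w_i^\ep)^2/2$, multiply the $i$-th difference equation by $(v_i^\ep-w_i^\ep)$, and use the uniform convexity of $H_i$ to drop the Hamiltonian difference; the coupling contribution $(v_i^\ep-w_i^\ep)\bigl[(v_i^\ep-w_i^\ep)-(v_j^\ep-w_j^\ep)\bigr]$ rewrites as $(\varphi_i-\varphi_j)+\tfrac12\bigl[(v_i^\ep-w_i^\ep)-(v_j^\ep-w_j^\ep)\bigr]^2$. After pairing with $\sig_i^\ep$, summing in $i$, and integrating, the $\varphi$-terms are absorbed by the adjoint system while the squared difference survives on the left with weight $\sig_1^\ep+\sig_2^\ep$, yielding the $O(\ep)$ bound. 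Convexity of the Hamiltonians is essential here; the argument has nothing to do with redistribution of $\sig^\ep$-mass.
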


\subsection{Convergence Mechanisms: Weakly Coupled Systems} \label{sec7}
The adjoint system corresponding to (SA)$_{\ep}$ is 
\begin{equation*} 
{\rm (SAJ)_\ep}\quad 
\begin{cases}
-\ep (\sig_1^\ep)_t - \text{div}(D_pH_1(x,Dw_1^\ep)\sig_1^\ep) + \sig_1^\ep-\sig_2^\ep
= \ep^4 \Del \sig_1^\ep & \textnormal{ in } \T^n \times (0,1), \\
-\ep (\sig_2^\ep)_t - \text{div}(D_pH_2(x,Dw_2^\ep)\sig_2^\ep) + \sig_2^\ep-\sig_1^\ep  
= \ep^4 \Del \sig_2^\ep & \textnormal{ in } \T^n \times (0,1), \\
\sigma_i^\ep(x,1) = \delta_{ik} \del_{x_0} &\textnormal{ on }\T^n \ \text{for} \ i=1,2.
\end{cases}
\end{equation*}
where $\del_{ik}=1$ if $i=k$ and 
$\del_{ik}=0$ if $i\not=k$, and 
$x_0 \in \T^n$ and $k \in \{1,2\}$ are to be chosen later. 
Notice that for any choice of $k$, either $\sig_1^\ep(\cdot,1)=0$ or $\sig_2^\ep(\cdot,1)=0$.
Let us record some elementary properties of $(\sig_1^\ep,\sig_2^\ep)$ first.
\begin{lem}[Elementary properties of $(\sig_1^\ep,\sig_2^\ep)$]
We have $\sig_i^\ep \ge 0$ for $i=1,2$ and
$$
\sum_{i=1}^2 \int_{\T^n} \sig_i^\ep(x,t)\,dx=1 \quad \text{for all} \ t\in [0,1].
$$
\end{lem}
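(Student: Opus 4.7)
The plan is to prove the two claims separately by routine manipulations of the adjoint system, exactly as in the scalar analogue noted after Lemma 2.1 in the single-equation case.

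For the integral identity, I would integrate each equation of (SAJ)$_\ep$ over $\T^n$. Thanks to periodicity, both the divergence term $\int_{\T^n} \Div(D_pH_i(x,Dw_i^\ep)\sig_i^\ep)\,dx$ and the Laplacian term $\int_{\T^n} \Del \sig_i^\ep\,dx$ vanish. What remains is, for $(i,j) \in \{(1,2),(2,1)\}$,
$$
-\ep \frac{d}{dt} \int_{\T^n} \sig_i^\ep(x,t)\,dx + \int_{\T^n} \sig_i^\ep(x,t)\,dx - \int_{\T^n} \sig_j^\ep(x,t)\,dx = 0.
$$
Summing these two identities, the coupling terms cancel exactly, so
$$
\frac{d}{dt}\sum_{i=1}^{2} \int_{\T^n} \sig_i^\ep(x,t)\,dx = 0 \quad \text{on } [0,1].
$$
Evaluating at $t=1$ using the terminal condition $\sig_i^\ep(\cdot,1)=\del_{ik}\del_{x_0}$ (with exactly one of the two indices producing a unit Dirac mass), the constant value must be $1$.

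For nonnegativity, I would reverse time via $\tau=1-t$ and $\tilde\sig_i(x,\tau)=\sig_i^\ep(x,1-\tau)$. The resulting forward system is weakly coupled parabolic in $\tau$ with off-diagonal coefficient equal to $-1$, which is nonpositive; this is precisely the monotonicity/cooperativity condition (the system-level analogue of (H4)) that makes the parabolic maximum principle applicable. Since the initial data $\tilde\sig_i(\cdot,0)=\del_{ik}\del_{x_0}$ is a nonnegative measure, the maximum principle forces $\tilde\sig_i\ge 0$, hence $\sig_i^\ep\ge 0$. An equivalent duality argument would be: for any smooth nonnegative $(\phi_1,\phi_2)$ solving the forward linearized monotone system associated with (SA)$_\ep$, integration by parts gives $\int_{\T^n}\sig_i^\ep(x,0)\phi_i(x,0)\,dx = \sum_i \phi_i(x_0,1)\del_{ik}\ge 0$, and since the monotone structure preserves nonnegativity of $\phi_i$ for arbitrary nonnegative terminal data, this forces $\sig_i^\ep\ge 0$.

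The only mildly delicate point is that the terminal datum is a measure, so the maximum principle cannot be applied pointwise at $t=1$. I would handle this by replacing $\del_{x_0}$ with a smooth nonnegative mollification $\rho_\del\ge 0$ of unit mass, proving both assertions for the resulting classical solutions $\sig_i^{\ep,\del}$ by the arguments above, and passing to the limit $\del\to 0$ in the distributional sense; the linearity of (SAJ)$_\ep$ and the standard stability estimates for adjoint parabolic systems make this limit routine.
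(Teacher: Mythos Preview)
Your proposal is correct and is exactly the natural argument the paper has in mind; the paper does not give a proof of this lemma at all (just as in the scalar case it only remarks that ``this is a straightforward result of adjoint operator and easy to check''), so your write-up simply fills in the routine details.
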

We next derive key integral bounds for $(w_1^\ep,w_2^\ep)$,
$(v_1^\ep,v_2^\ep)$ and their derivatives on the supports of $(\sig_1^\ep,\sig_2^\ep)$.
\begin{lem}[Key estimates for weakly coupled systems] \label{lem:couple}
The followings hold true:\\
{\rm (i)} \
$\displaystyle
\int_0^1 \int_{\T^n} 
\sum_{i=1}^2 \left( \dfrac{1}{\ep}|D(w_i^\ep-v_i^\ep)|^2
+\ep^7 |D^2(w_i^\ep-v_i^\ep)|^2 \right) \sig_i^\ep\,dx\,dt \le C,$\\
{\rm (ii)} \  
$\displaystyle
\int_0^1 \int_{\T^n}  [(w_1^\ep-v_1^\ep)
-(w_2^\ep-v_2^\ep)]^2 (\sig_1^\ep+\sig_2^\ep)\,dx\,dt \le C\ep$. 
\end{lem}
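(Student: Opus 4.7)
The plan is to adapt the scalar proof of Lemma \ref{LEM} to the coupled system, exploiting the monotone structure of the system while carefully tracking the new coupling terms $(V_i - V_j)$ that arise. Set $V_i := v_i^\ep - w_i^\ep$ for $i = 1, 2$. Subtracting (SA)$_\ep$ from (SE)$_\ep$ gives the exact identity
\[
\ep (V_i)_t + \big[H_i(x,Dv_i^\ep) - H_i(x,Dw_i^\ep)\big] + (V_i - V_j) - \ep^4 \Del V_i - \ol{H}_\ep = 0, \quad j \ne i.
\]

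\textbf{Part (i).} Using the uniform convexity of $H_i$ to bound the Hamiltonian difference from below by $D_pH_i(x,Dw_i^\ep) \cdot DV_i + \theta|DV_i|^2$, one obtains
\[
\theta|DV_i|^2 + (V_i - V_j) \le \ol{H}_\ep - \ep(V_i)_t - D_pH_i \cdot DV_i + \ep^4 \Del V_i.
\]
Multiply by $\sig_i^\ep \ge 0$, integrate over $\T^n \times [0,1]$, and use the adjoint system (SAJ)$_\ep$ to convert the derivative terms on the right into an $O(\ep)$ boundary contribution plus $\int_0^1\!\int V_i(\sig_i^\ep - \sig_j^\ep)\,dx\,dt$. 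The crucial algebraic observation is the cancellation upon summing over $i$: both $\sum_i \int\!\!\int(V_i - V_j)\sig_i^\ep$ and $\sum_i\int\!\!\int V_i(\sig_i^\ep - \sig_j^\ep)$ equal $\int\!\!\int(V_1 - V_2)(\sig_1^\ep - \sig_2^\ep)$ and therefore drop out simultaneously. Combined with $|\ol{H}_\ep| = O(\ep^2)$ and the $C^1$ bounds from Lemma \ref{appro-lem2}, this yields the first term of (i). The second-order bound follows by differentiating the identity in $x_k$, testing against $(V_i)_{x_k}\sig_i^\ep$, and imitating the scalar argument; the new contribution $\sum_k (V_i - V_j)_{x_k}(V_i)_{x_k}$ produces, after summing over $i$ and exploiting the adjoint coupling, a quantity with favorable sign that can be dropped.

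\textbf{Part (ii).} This is the main novelty for systems. The plan is to square the coupling by multiplying the exact identity by $(V_i - V_j)\sig_i^\ep$:
\[
(V_i - V_j)^2 \sig_i^\ep = (V_i - V_j)\sig_i^\ep\big\{\ol{H}_\ep - \ep(V_i)_t - \big[H_i(Dv_i^\ep) - H_i(Dw_i^\ep)\big] + \ep^4 \Del V_i\big\}.
\]
Write $I := \sum_{i=1}^2 \int_0^1\!\!\int (V_i - V_j)^2 \sig_i^\ep\,dx\,dt$. After integration, the $\ol{H}_\ep$ contribution is $O(\ep^2)$ since $|V_i - V_j|$ is uniformly bounded. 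The Hamiltonian difference obeys $|H_i(Dv_i^\ep) - H_i(Dw_i^\ep)| \le C|DV_i|$, so Cauchy--Schwarz against the first part of (i) bounds its contribution by $C\sqrt{\ep}\,\sqrt{I}$. The Laplacian term is handled by spatial integration by parts followed by a similar Cauchy--Schwarz, again giving $C\sqrt{\ep}\,\sqrt{I}$. For the time-derivative term, use the product-rule decomposition
\[
(V_i - V_j)(V_i)_t = \tfrac{1}{2}\big((V_i - V_j)^2\big)_t + (V_i - V_j)(V_j)_t,
\]
integrate by parts in time, and then invoke (SAJ)$_\ep$ to control the resulting $(V_i - V_j)^2(\sig_i^\ep)_t$ term, once more via Cauchy--Schwarz with part (i). Summing over $i$ and absorbing via Young's inequality $C\sqrt{\ep}\,\sqrt{I} \le \tfrac{1}{2}I + C\ep$ leaves $I \le C\ep$, which is exactly (ii).

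\textbf{Main obstacle.} The most delicate step is the time-derivative term in (ii): because the test function is $(V_i - V_j)\sig_i^\ep$ rather than the bare $\sig_i^\ep$, the adjoint (SAJ)$_\ep$ cannot be applied in a single integration by parts. The product-rule identity above is what extracts the quadratic quantity $(V_i - V_j)^2(\sig_i^\ep)_t$, which is then tractable via (SAJ)$_\ep$ and part (i); keeping track of the cross term $(V_i - V_j)(V_j)_t$ and its interaction with the coupling in the adjoint system is where the book-keeping becomes heaviest.
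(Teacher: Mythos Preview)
Your treatment of (i) is correct and in spirit matches the paper: the cancellation you observe between $\sum_i\int(V_i-V_j)\sig_i^\ep$ and $\sum_i\int V_i(\sig_i^\ep-\sig_j^\ep)$ is exactly what makes the scalar argument go through.

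Your approach to (ii), however, has a genuine gap. The test function $(V_i-V_j)\sig_i^\ep$ mixes the index $j$ into an integral weighted by $\sig_i^\ep$, and the resulting cross-index terms cannot be made $O(\ep)$ with the estimates at hand. Concretely, after your product-rule split you are left with
\[
-\ep\int_0^1\!\!\int_{\T^n}(V_i-V_j)(V_j)_t\,\sig_i^\ep\,dx\,dt,
\]
and the only available control is $|\ep(V_j)_t|=|\ep(w_j^\ep)_t|\le C$, which yields merely $C\sqrt{I}$ and hence $I\le C$ rather than $I\le C\ep$. The same obstruction reappears when you feed $\ep(\sig_i^\ep)_t$ through (SAJ)$_\ep$: the divergence part produces
\[
\int_0^1\!\!\int_{\T^n}(V_i-V_j)\,D(V_i-V_j)\cdot D_pH_i(x,Dw_i^\ep)\,\sig_i^\ep\,dx\,dt,
\]
which involves $|DV_j|^2\sig_i^\ep$. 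Part (i) controls $\int|DV_i|^2\sig_i^\ep$ by $C\ep$, but for $\int|DV_j|^2\sig_i^\ep$ you have only the crude Lipschitz bound $\le C$, again yielding $I\le C$. Your ``heaviest book-keeping'' remark papers over precisely this loss of an $\ep$.

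The paper avoids the problem by choosing a different multiplier: it tests the difference equation against $V_i$ (after adding a constant so that $V_i\ge 0$), sets $\varphi_i=V_i^2/2$, and uses the elementary identity
\[
V_i(V_i-V_j)=\varphi_i-\varphi_j+\tfrac{1}{2}(V_i-V_j)^2.
\]
The piece $\varphi_i-\varphi_j$ is exactly the coupling that matches (SAJ)$_\ep$, so after multiplying by $\sig_i^\ep$ and integrating, the operator $\ep\partial_t+D_pH_i\cdot D-\ep^4\Del+(\cdot)-(\cdot)_j$ acts on $\varphi_i$ and collapses to an $O(\ep)$ boundary term, leaving $\tfrac{1}{2}\int\!\!\int(V_i-V_j)^2(\sig_1^\ep+\sig_2^\ep)\le C\ep$ directly. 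No cross-index gradients or time derivatives ever appear. The idea you are missing is this choice of multiplier together with the algebraic splitting above.
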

Lemma \ref{lem:couple} (ii) is a new observation on the study of weakly coupled systems, which gives us the large time average control on the coupling terms. This is actually the key point in the derivation of the main result for systems (Theorem \ref{mainsist} and Theorem \ref{thm-12}) as one can see in the proof of Lemma \ref{coe-sys}.

\begin{proof}
We will only prove (ii),
since part (i) can be derived
by repeating the proof of Lemma \ref{LEM}.

Thanks to Lemma \ref{appro-lem2}, 
we can always add to the pair $(v_1^\ep , v_2^\ep)$
an arbitrarily large constant $C$ (independent of $\ep$)
such that
\begin{equation} \label{v-w}
2C \ge v_i^\ep \ge w_i^\ep \ \text{in}\ \T^n, \ \text{for} \ i=1,2.
\end{equation}

Let $\varphi_i=(v_i^\ep-w_i^\ep)^2/2$ for $i=1,2$.
Subtract the first equation of (SA)$_{\ep}$ from the first equation of (SE)$_\ep$, 
and multiply the result by $v_1^\ep-w_1^\ep$ to get
\begin{align*}
&\ep (v_1^\ep-w_1^\ep)(v_1^\ep-w_1^\ep)_t
+(v_1^\ep-w_1^\ep)(H_1(x,Dv_1^\ep)-H_1(x,Dw_1^\ep))\\
&+ (v_1^\ep-w_1^\ep)^2
-(v_1^\ep-w_1^\ep)(v_2^\ep-w_2^\ep) =
\ep^4 (v_1^\ep-w_1^\ep) \Delta (v_1^\ep-w_1^\ep) +\ol{H}_\ep (v_1^\ep-w_1^\ep).
\end{align*}
We employ the convexity of $H_1$ and \eqref{v-w} to deduce that
\begin{multline}\label{cc-1}
\ep(\varphi_1)_t
+D_pH_1(x,Dw_1^\ep)\cdot D\varphi_1
+ \varphi_1 - \varphi_2\\
+ \frac{1}{2} [ (v_1^\ep-w_1^\ep)-(v_2^\ep-w_2^\ep)]^2
\le
\ep^4  \Delta\varphi_1 - \ep^4 |D(v_1^\ep-w_1^\ep)|^2+C |\ol{H}_\ep|.
\end{multline}
Similarly,
\begin{multline}\label{cc-2}
\ep(\varphi_2)_t
+D_pH_2(x,Dw_2^\ep)\cdot D\varphi_2
 + \varphi_2 - \varphi_1\\
+ \frac{1}{2} [ (v_1^\ep-w_1^\ep)-(v_2^\ep-w_2^\ep)]^2
\le
\ep^4  \Delta\varphi_2 - \ep^4 |D(v_2^\ep-w_2^\ep)|^2+C |\ol{H}_\ep|.
\end{multline}
Multiplying \eqref{cc-1}, \eqref{cc-2} by $\sig_1^\ep,\ \sig_2^\ep$ respectively, and integrating by parts
\begin{align*}
&\frac{1}{2} \int_0^1 \int_{\T^n}  [ (v_1^\ep-w_1^\ep)-(v_2^\ep-w_2^\ep)]^2 (\sig_1^\ep+\sig_2^\ep)\,dx\,dt \\
\leq \  & -\sum_{i=1}^2  \varepsilon \left[ \int_{\T^n} \varphi_i \sigma^{\ep}_i \, dx \right]_{t=0}^{t=1}+
C |\ol{H}_\ep| - \ep^4  \sum_{i=1,2} \int_{\T^n} |D(v_i^\ep-w_i^\ep)|^2 \sigma_i^\ep\, dx
\le C \ep,
\end{align*}
which implies (ii).
\end{proof}

\subsection{Averaging Action and Proof of Theorem \ref{thm-12}} \label{sec8}
For each $i \in \{1,2\}$, setting $j=3-i$ we have $\{i,j\}=\{1,2\}$.
The following result concerning conservation
of energy and averaging action is analogous to Lemma \ref{lem-1} and therefore 
we omit the proof. 
\begin{lem}[Conservation of Energy for weakly coupled systems] \label{coe-sys}
The following hold{\rm:}\\
{\rm(i)} \ 
$\displaystyle \dfrac{d}{dt} \int_{\T^n} \sum_{i=1}^2
(H_i(x,Dw_i^\ep) +w_i^\ep -w_j^\ep-\ep^4 \Del w_i^\ep)\sig_i^\ep\,dx=0.$ 
\begin{align*}
{\rm (ii)} \ 
-(k-1)\ep (w_1^\ep)_t(x_0,1)&-(2-k)\ep (w_2^\ep)_t(x_0,1) \qquad \qquad \qquad \qquad \qquad \qquad \qquad \qquad \\
=&\int_{0}^1  \int_{\T^n} \sum_{i=1}^2
(H_i(x,Dw_i^\ep) +w_i^\ep -w_j^\ep-\ep^4 \Del w_i^\ep)\sig_i^\ep\,dx\,dt, 
\end{align*}
where $k=1, 2$. 
\end{lem}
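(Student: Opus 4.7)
The strategy mirrors Lemma \ref{lem-1} from the scalar case, with additional bookkeeping to handle the coupling terms. For part (i), I would differentiate the energy integrand $\sum_i (H_i(x, Dw_i^\ep) + w_i^\ep - w_j^\ep - \ep^4 \Del w_i^\ep) \sig_i^\ep$ under the integral sign via the product rule, splitting the resulting terms by whether a $(w_i^\ep)_t$ or a $(\sig_i^\ep)_t$ appears. In the first group, I would integrate by parts in $x$ so that the spatial derivatives of $(w_i^\ep)_t$ generated by $D_pH_i(x,Dw_i^\ep)\cdot D(w_i^\ep)_t$ and $\ep^4\Del(w_i^\ep)_t$ fall onto $\sig_i^\ep$. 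In the second group, the factor multiplying $(\sig_i^\ep)_t$ is precisely $H_i(x,Dw_i^\ep)+w_i^\ep-w_j^\ep-\ep^4\Del w_i^\ep$, which (SA)$_\ep$ rewrites as $-\ep(w_i^\ep)_t$.

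After these reductions, every surviving term is a product of $(w_i^\ep)_t$ against one of $\Div(D_pH_i(x,Dw_i^\ep)\sig_i^\ep)$, $\ep^4\Del\sig_i^\ep$, $\ep(\sig_i^\ep)_t$, or the coupling difference $\sig_i^\ep-\sig_j^\ep$, the last one arising from the $(w_i^\ep)_t-(w_j^\ep)_t$ piece after relabeling $i\leftrightarrow j$ in one summand to redisplay it against $\sig_i^\ep$. I would then invoke the adjoint system (SAJ)$_\ep$, which states exactly $\Div(D_pH_i(x,Dw_i^\ep)\sig_i^\ep)+\ep^4\Del\sig_i^\ep=-\ep(\sig_i^\ep)_t+\sig_i^\ep-\sig_j^\ep$, to substitute and obtain total cancellation. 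The only new feature relative to the scalar proof is that the coupling piece in (SAJ)$_\ep$ matches, with the opposite sign, the coupling piece coming from $w_i^\ep-w_j^\ep$ in the energy, so these two annihilate one another.

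Part (ii) then follows immediately from (i). Since (i) says that the spatial integral $\int_{\T^n}\sum_i(\cdots)\sig_i^\ep\,dx$ is constant in $t$, its integral over $[0,1]$ equals its value at any fixed time, say $t=1$. The terminal condition $\sig_i^\ep(\cdot,1)=\delta_{ik}\delta_{x_0}$ then collapses both the $x$-integration and the sum over $i$ to the point evaluation at $x_0$ of the $i=k$ summand of the energy, which by (SA)$_\ep$ at $(x_0,1)$ equals $-\ep(w_k^\ep)_t(x_0,1)$. The linear combination $-(k-1)\ep(w_1^\ep)_t(x_0,1)-(2-k)\ep(w_2^\ep)_t(x_0,1)$ is simply a compact way of writing this same identity uniformly in $k\in\{1,2\}$. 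I expect the main obstacle to be careful sign-tracking during the integration by parts and the index swap $j=3-i$, so that the forward and adjoint coupling contributions cancel cleanly rather than reinforce each other.
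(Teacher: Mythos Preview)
Your proposal is correct and matches the paper's approach exactly: the paper omits the proof entirely, stating only that it is analogous to Lemma~\ref{lem-1}, and your argument is precisely that analogy carried out with the additional bookkeeping for the coupling terms. The one minor point worth flagging is that the linear combination $-(k-1)\ep(w_1^\ep)_t(x_0,1)-(2-k)\ep(w_2^\ep)_t(x_0,1)$ as written in the statement actually selects $-\ep(w_{3-k}^\ep)_t(x_0,1)$ rather than $-\ep(w_k^\ep)_t(x_0,1)$, so there appears to be an index swap in the paper's formulation of (ii); your derivation of $-\ep(w_k^\ep)_t(x_0,1)$ from the terminal data is the right computation, and this discrepancy does not affect how the lemma is used downstream.
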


\begin{proof}[Proof of Theorem {\rm\ref{thm-12}}]
Without loss of generality, we assume that there exists $x_0 \in \T^n$ such that
$$
\ep |(w_1^\ep)_t(x_0,1)|= \ep \max_{i=1,2}\|(w_i^\ep)_t(\cdot,1)\|_{L^\infty(\T^n)}.
$$
We then choose $k=1$ in (SAJ)$_{\ep}$ and use Lemma \ref{coe-sys} to get
\begin{align*}
&\ep \max_{i=1,2}\|(w_i^\ep)_t(\cdot,1)\|_{L^\infty(\T^n)} 
= \left| \int_{0}^1  \int_{\T^n} \sum_{i=1}^2
(H_i(x,Dw_i^\ep) +w_i^\ep -w_j^\ep-\ep^4 \Del w_i^\ep)\sig_i^\ep\,dx\,dt  \right| \\
&\leq \left| \int_{0}^{1} \int_{\T^n} 
\sum_{i=1}^2 \Big\{H_i(x,Dw_i^\ep)+w_i^\ep -w_j^\ep-\ep^4 \Del w_i^\ep) \right. \\
& \hspace{4cm} -(H_i(x,Dv_i^\ep) +v_i^\ep -v_j^\ep-\ep^4 \Del v_i^\ep)\Big\}\sig^\ep_i \,dx\,dt \Bigg| + |\ol{H}_\ep| \\
& \le \int_{0}^{1} \int_{\T^n} \sum_{i=1}^2 \left[ C|D(w^\ep_i-v^\ep_i)| 
+ \ep^4 |\Del (w^\ep_i-v^\ep_i)| \right] \sig^\ep_i \,dx\,dt \\
& \hspace{3.2cm}+\int_0^1 \int_{\T^n} \big| [(w_1^\ep-v_1^\ep)
-(w_2^\ep-v_2^\ep)] (\sig_1^\ep-\sig_2^\ep) \big| \,dx\,dt + |\ol{H}_\ep|. 
\end{align*}
Thus, 
\begin{align*}
&\left| \int_{0}^1  \int_{\T^n} \sum_{i=1}^2
(H_i(x,Dw_i^\ep) +w_i^\ep -w_j^\ep-\ep^4 \Del w_i^\ep)\sig_i^\ep\,dx\,dt  \right| \\
&\leq C \sum_{i=1}^2 \Big\{\left[ \int_0^1 \int_{\T^n} |D(w^\ep_i-v^\ep_i)|^2 \sig^\ep_i\,dx\,dt \right]^{\frac{1}{2}} 
+  \ep^4 \left[ \int_0^1 \int_{\T^n} |D^2(w^\ep_i-v^\ep_i)|^2 \sig^\ep_i\,dx\,dt \right]^{\frac{1}{2}} \Big\}\\
&\hspace{.4cm}+  \sum_{i=1}^2 \left[ \int_0^1 \int_{\T^n}
 | (w_1^\ep-v_1^\ep) -(w_2^\ep-v_2^\ep) |^2 \sig_i^\ep  \,dx\,dt \right]^{1/2}+ |\ol{H}_\ep|
\le C \sqrt{\ep},
\end{align*}
where the last inequality follows by using Lemma \ref{lem:couple}.
\end{proof}

\subsection{General Case} \label{general}
We address now the general case of
systems of $m$-equations 
of the form
$$
(u_i)_t + H_i(x,Du_i) + \sum_{j=1}^m c_{ij} u_j=0 \quad \text{in} \ \Q, 
$$
with $(H_i,0)\in \cC(\theta,C)$ and $c_{ij}$ satisfying (H4) 
for any $1\le i,j\le m$. 
As stated before, the key point is to  generalize
the coupling terms as in part (ii) of Lemma \ref{lem:couple}. 
More precisely, we show that
\begin{equation}\label{m-equ-1}
\lim_{\ep \to 0} \int_0^1 \int_{\T^n} \sum_{j=1}^m  |c_{ij}| \,
 [(w_j^\ep-v_j^\ep)
-(w_i^\ep-v_i^\ep)]^2 \sig_i^\ep\,dx\,dt=0.
\end{equation}
Set $\varphi_i=(v_i^\ep-w_i^\ep)^2/2$ for $i=1,\ldots,m$. Then
we can compute that
\begin{align*}
&\ep (v_i^\ep-w_i^\ep)(v_i^\ep-w_i^\ep)_t
+(v_i^\ep-w_i^\ep)(H_i(x,Dv_i^\ep)-H_i(x,Dw_i^\ep))\\
&+  \sum_{j=1}^m c_{ij} (v_i^\ep-w_i^\ep)(v_j^\ep-w_j^\ep) =
\ep^4 (v_i^\ep-w_i^\ep) \Delta (v_i^\ep-w_i^\ep) +\ol{H}_\ep (v_i^\ep-w_i^\ep).
\end{align*}
The last term in the right hand side of the above identity can be written as
\begin{align*}
& \sum_{j=1}^m c_{ij} (v_i^\ep-w_i^\ep)(v_j^\ep-w_j^\ep) 
=\sum_{j \ne i} |c_{ij}| \, \Big\{  (v_i^\ep-w_i^\ep)^2-(v_i^\ep-w_i^\ep)(v_j^\ep-w_j^\ep) \Big \}\\
=&\sum_{j \ne i} |c_{ij}| \, \Big \{  \frac{1}{2}(v_i^\ep-w_i^\ep)^2
-\frac{1}{2}(v_j^\ep-w_j^\ep)^2+\frac{1}{2}[(v_i^\ep-w_i^\ep)-(v_j^\ep-w_j^\ep)]^2 \Big \}\\
=&\sum_{j=1}^m c_{ij} \varphi_j+\dfrac{1}{2}\sum_{j=1}^m |c_{ij}|\, [ (v_j^\ep-w_j^\ep)-(v_i^\ep-w_i^\ep)]^2.
\end{align*}
Hence
\begin{multline}\notag
\ep(\varphi_i)_t
+D_pH_i(x,Dw_i^\ep)\cdot D\varphi_i
 +\sum_{j=1}^m c_{ij} \varphi_j\\
+\dfrac{1}{2}\sum_{j=1}^m |c_{ij}|\, [ (v_j^\ep-w_j^\ep)-(v_i^\ep-w_i^\ep)]^2
\le
\ep^4  \Delta\varphi_i- \ep^4 |D(v_i^\ep-w_i^\ep)|^2+C {|\ol{H}_\ep|}.
\end{multline}
Then \eqref{m-equ-1} follows immediately.

\medskip
\noindent
\textbf{Acknowledgments} 
The works of 
FC and DG were partially supported by CAMGSD-LARSys through FCT-Portugal 
and by grants PTDC/MAT/114397/2009,
UTAustin
/MAT/0057/2008, and UTA-CMU/MAT/0007/2009.  
The work of FC was partial 
supported by the UTAustin-Portugal partnership through the FCT post-doctoral fellowship
SFRH/BPD/51349/2011. 
The work of HM was partially supported by KAKENHI  
\#24840042, JSPS and Grant for Basic Science Research Projects from the 
Sumitomo Foundation.

\begin{thebibliography}{30} 

\bibitem{BIM2}
G. Barles, H. Ishii, H. Mitake, 
\emph{
A new PDE approach to the large time asymptotics of solutions   
of Hamilton-Jacobi equation}, 
to appear in Bull. Math. Sci. 

\bibitem{BS}
G. Barles, P. E. Souganidis, \emph{On the large time behavior of solutions of Hamilton--Jacobi equations}, 
SIAM J. Math. Anal. {\bf 31} (2000), no. 4, 925--939. 

\bibitem{BS2} 
G. Barles, P. E. Souganidis, 
\emph{Space-time periodic solutions and long-time behavior of solutions to quasi-linear parabolic equations}, 
SIAM J. Math. Anal. 32 (2001), no. 6, 1311--1323. 

\bibitem{CGT1} F. Cagnetti, D. Gomes, H. V. Tran.
\emph{Aubry-Mather measures in the non convex setting},
{SIAM J. Math. Anal. {\bf 43} (2011), no. 6, 2601--2629}. 

\bibitem{CGT2}
F. Cagnetti, D. Gomes, H. V. Tran, 
\emph{Adjoint methods for obstacle problems and weakly coupled systems of {P}{D}{E}},
ESAIM Control Optim. Calc. Var. {\bf 19} (2013), no. 3, 754--779.

\bibitem{CLLN}
F. Camilli, O. Ley, P. Loreti and V. Nguyen, 
\emph{Large time behavior of weakly coupled systems of first-order Hamilton--Jacobi equations}, 
 NoDEA Nonlinear Differential Equations Appl. {\bf19} (2012), 719--749. 
 
\bibitem{DS}
A. Davini, A. Siconolfi, 
\emph{A generalized dynamical approach to the large-time behavior of solutions of Hamilton--Jacobi equations}, 
SIAM J. Math. Anal. {\bf 38} (2006), no. 2, 478--502.  

\bibitem{Ev1}
L. C. Evans,
\emph{Adjoint and compensated compactness methods for Hamilton--Jacobi PDE}, 
Archive for Rational Mechanics and Analysis {\bf 197} (2010), 1053--1088.

\bibitem{Ev2}
L. C. Evans,
\emph{Envelopes and nonconvex Hamilton--Jacobi equations}, 
to appear in Calculus of Variations and PDE.

\bibitem{F2}
A. Fathi, 
\emph{Sur la convergence du semi-groupe de Lax-Oleinik}, 
C. R. Acad. Sci. Paris S\'er. I Math. {\bf 327} (1998), no. 3, 267--270.

\bibitem{FS}
A. Fathi, A. Siconolfi, 
\emph{Existence of {$C\sp 1$} critical subsolutions of the Hamilton-Jacobi equation}, 
Invent. Math. {\bf 155} (2004), no. 2, 363--388. 

\bibitem{I2008}
H. Ishii, 
\emph{Asymptotic solutions for large-time of Hamilton--Jacobi equations in Euclidean n space}, 
Ann. Inst. H. Poincar\'e Anal. Non Lin\'eaire, {\bf25} (2008), no 2, 231--266. 

\bibitem{LN}
O. Ley, V. D. Nguyen,
Large time behavior for some nonlinear degenerate parabolic equations, 
preprint. (arXiv:1306.0748)



\bibitem{LPV}  
P.-L., Lions, G. Papanicolaou, S. R. S. Varadhan,  
Homogenization of Hamilton--Jacobi equations, unpublished work (1987). 

\bibitem{MT1}
H. Mitake, H. V. Tran, 
\emph{Remarks on the large-time behavior of  
viscosity solutions of quasi-monotone weakly coupled systems 
of Hamilton--Jacobi equations}, 
Asymptot. Anal., {\bf77} (2012), 43--70.  

\bibitem{MT2}
H. Mitake, H. V. Tran, 
\emph{Homogenization of weakly coupled systems of
Hamilton--Jacobi equations with fast switching rates}, 
to appear in Archive for Rational Mechanics and Analysis.

\bibitem{MT3}
H. Mitake, H. V. Tran, 
\emph{A dynamical approach to the large-time behavior of solutions to weakly coupled systems of Hamilton--Jacobi equations},
to appear in Journal de Math\'ematiques Pures et Appliqu\'ees. 

\bibitem{MT4}
H. Mitake, H. V. Tran, 
\emph{Large-time behavior for obstacle problems for degenerate viscous Hamilton--Jacobi equations}, submitted. (arXiv:1309.4831)

\bibitem{N1}
V. D. Nguyen,
\emph{Some results on the large time behavior of weakly coupled systems of first-order Hamilton-Jacobi equations},
preprint. (arXiv:1209.5929)

\bibitem{NR}
G. Namah, J.-M. Roquejoffre, 
\emph{Remarks on the long time behaviour of the solutions of Hamilton--Jacobi equations}, 
Comm. Partial Differential Equations {\bf 24} (1999), no. 5-6, 883--893. 


\bibitem{SV}
D. W. Stroock and  S. R. S. Varadhan,
Multidimensional diffusion processes. 
Reprint of the 1997 edition. Classics in Mathematics. Springer-Verlag, Berlin, 2006. xii+338 pp.

\bibitem{T1}
H. V. Tran, 
\emph{Adjoint methods for static Hamilton-Jacobi equations},
 Calculus of Variations and PDE {\bf 41} (2011), 301--319. 
\end {thebibliography}

\end{document}